\newtheorem{theorem}{\bf Theorem}[section]
\newtheorem{lemma}[theorem]{\bf Lemma}
\newtheorem{proposition}[theorem]{\bf Proposition}
\newtheorem{definition}[theorem]{\bf Definition}
\newcommand{\qed}{\hfill $\square$ \bigskip}
\begin{document}

\baselineskip=0.30in
\vspace*{40mm}

\begin{center}
{\LARGE \bf General cut method for computing Szeged-like topological indices with applications to molecular graphs}
\bigskip \bigskip

{\large \bf Simon Brezovnik$^{a,b}$, \qquad
Niko Tratnik$^{a}$
}
\bigskip\bigskip

\baselineskip=0.20in

$^a$\textit{Faculty of Natural Sciences and Mathematics, University of Maribor, Slovenia} \\
{\tt simon.brezovnik2@um.si, niko.tratnik@um.si}
\medskip

$^b$\textit{Faculty of Education, University of Maribor, Slovenia}

\bigskip\medskip

(Received \today)

\end{center}

\noindent
\begin{center} {\bf Abstract} \end{center}
Szeged, PI and Mostar indices are some of the most investigated distance-based molecular descriptors. Recently, many different variations of these topological indices appeared in the literature and sometimes they are all together called Szeged-like topological indices. In this paper, we formally introduce the concept of a general Szeged-like topological index, which includes all mentioned indices and also infinitely many other topological indices that can be defined in a similar way. As the main result of the paper, we provide a cut method for computing a general Szeged-like topological index for any strength-weighted graph. This greatly generalizes various methods known for some of the mentioned indices and therefore rounds off such investigations. 
Moreover, we provide applications of our main result to benzenoid systems, phenylenes, and coronoid systems, which are well-known families of molecular graphs. In particular, closed-form formulas for some subfamilies of these graphs are deduced.
\vspace{3mm}\noindent


\baselineskip=0.30in



\section{Introduction}
In mathematical chemistry and in chemoinformatics, many numerical quantities of molecular graphs have been introduced and studied in order to describe various properties of molecules. Such graph invariants are most commonly referred to as molecular descriptors. They are used for the development of quantitative structure-activity relationships (QSAR) and quantitative structure-property relationships (QSPR) in which some properties of compounds are correlated with their chemical structure. However, in recent years similar descriptors have found enormous applications also in a rapidly growing research of complex networks \cite{estrada}, for example communications networks, social networks, biological networks, etc. Whenever a graph invariant is used for describing molecular structure or network topology, we usually call it a topological index or topological descriptor. 

One of the oldest topological indices is the famous \textit{Wiener index}, which is defined for a connected graph $G$ as 
$$W(G) = \sum_{\lbrace u,v \rbrace \subseteq V(G)}d_G(u,v),$$
where $d_G(u,v)$ represents the distance between vertices $u$ and $v$ in $G$. It was introduced by H.\ Wiener in 1947 \cite{wiener} in order to calculate the boiling points of paraffins. Until now, it has found various applications in chemistry and in network theory.

It is easy to see that if $T$ is a tree, then
$$W(T) = \sum_{e =uv \in E(T)}n_u(e)n_v(e),$$
where $n_u(e)$ denotes the number of vertices of $T$ whose distance to $u$ is smaller than the distance to $v$ and $n_v(e)$ denotes the number of vertices of $T$ whose distance to $v$ is smaller than the distance to $u$. Therefore, I. Gutman \cite{gut_sz} used this formula to introduce the \textit{Szeged index}, which is for any connected graph $G$ defined as

$$Sz(G) = \sum_{e=uv \in E(G)}n_u(e)n_v(e).$$
Motivated by the success of the Szeged index, in \cite{def_pi} a similar molecular descriptor, that is called the \textit{PI index} (or the \textit{edge-PI index}), was defined with
$$PI(G)=PI_e(G) = \sum_{e=uv \in E(G)}\big(m_u(e) + m_v(e)\big),$$
where the numbers $m_u(e)$ and $m_v(e)$ are the edge-variants
of the numbers $n_u(e)$ and $n_v(e)$. It turned out that the Szeged index and the PI index also have many applications, for example in drug modelling \cite{drug} and in networks \cite{pisanski_rand}.

Later, the vertex version of the PI index, called the \textit{vertex-PI index}  \cite{khal1}, and the edge-version of the Szeged index, the \textit{edge-Szeged index} \cite{gut}, were defined as
$$PI_v(G) = \sum_{e=uv \in E(G)}\big(n_u(e) + n_v(e)\big), \quad Sz_e(G) = \sum_{e=uv \in E(G)}m_u(e)m_v(e).$$

Furthermore, in 2002 Randi\' c  introduced \cite{randic} the \textit{revised Szeged index}, $$Sz^*(G) = \sum_{e=uv \in E(G)}\left( n_u(e) + \frac{n_0(e)}{2} \right) \left( n_v(e) + \frac{n_0(e)}{2} \right),$$ where $n_0(e)$ is the number of vertices of the same distance from both $u$ and $v$ for $e =uv \in E(G)$. It turned out that the revised Szeged index has even better correlations with boiling points of cyclo-alkanes and that it can be used for measuring network bipartivity \cite{pisanski_rand}. Finally, the \textit{Mostar index} was recently introduced \cite{mostar} as a measure of peripherality and it is defined as $$Mo(G) = \sum_{e=uv \in E(G)} |n_u(e) - n_v(e)|.$$ 

In addition, Ili\'{c} and Milosavljevi\'{c} \cite{ilic} proposed  modifications of the Szeged index and the vertex-PI index, taking into account also the degrees of vertices. Therefore, they introduced the \textit{weighted Szeged index} and the \textit{weighted vertex-PI index}, which are defined as
\begin{eqnarray*}
wSz(G) &= &  \sum_{e=uv \in E(G)}\big(\deg(u) + \deg(v)\big)n_u(e)n_v(e),\\ wPI_v(G) &= &  \sum_{e=uv \in E(G)}\big(\deg(u) + \deg(v)\big)\big( n_u(e) + n_v(e)\big).
\end{eqnarray*}

Note that all the mentioned indices are often referred to as \textit{Szeged-like topological indices}. These quantities are some of the central and most commonly studied distance-based topological descriptors. For example, see recent research on Szeged indices \cite{kl-2020,bok,he,kl-2018}, Mostar indices \cite{deng,huang,imran,tepeh}, PI indices \cite{ma2}, and other versions of these indices \cite{ghorbani,li1,liu}.

A cut method has an important role in the investigation of molecular descriptors. Very often it was applied to benzenoid systems to efficiently compute distance-based topological indices, for example the Wiener index and the Szeged index \cite{chepoi-1997} or the edge-Szeged index and the PI index \cite{tratnik_edge_sz}. Later, a cut method was  generalized such that it can be used on partial cubes or on any connected graph by using $\Theta$ relation \cite{nad_klav,li}. When applying this method, we usually calculate a topological index by using weighted quotient graphs. A survey on different cut methods can be found in \cite{klavzar-2015}.

As already mentioned, a cut method was applied on various Szeged-like topological indices. For example, see recent papers on Szeged indices \cite{li}, weighted Szeged and PI indices \cite{tratnik_weighted_sz}, Mostar indices and weighted Mostar indices \cite{tratnik_mostar,aroc_cle_trat,aroc2}, and different distance-based topological indices \cite{aroc1,aroc3,aroc4}. In this paper, we greatly generalize these results 
by introducing the concept of a general Szeged-like topological index, which includes all the mentioned indices and also infinitely many other topological indices that can be defined in a similar way. We provide a cut method for computing a general Szeged-like topological index for any strength-weighted graph, which rounds off such investigations. 
Our method can be applied to efficiently calculate Szeged-like topological indices of various nanostructures and to deduce closed-form formulas for infinite families of graphs.

In Section 2 we introduce some basic definitions and concepts from graph theory. The concept of a strength-weighted graph and a general Szeged-like topological index is explained in the next section. We continue with the main result in Section 4, where a general cut method for computing a Szeged-like topological index of a strength-weighted graph is provided. Finally, in Section 5, applications of our cut method to well-known families of molecular graphs are shown. In particular, we consider benzenoid systems, phenylenes, and coronoid systems.

\section{Preliminaries}

The reader can find the explanation of all the basic concepts from graph theory in \cite{klavzar-book}. The graphs considered in this paper are simple and finite. For a graph $G$, the set of all the vertices is denoted by $V(G)$ and the set of edges by $E(G)$. Moreover, we define $d_G(u,v)$ to be the usual shortest-path distance between vertices $u, v\in V(G)$. Furthermore, the distance between a vertex $u \in V(G)$ and an edge $f=xy \in E(G)$ is defined as
$$d_G(u,f) = \min \lbrace d_G(u,x), d_G(u,y) \rbrace.$$

\noindent
For any $u \in V(G)$, the {\it open neighbourhood} $N(u)$ is defined as the set of all the vertices that are adjacent to $u$. The \textit{degree} of $u$, denoted by $\textrm{deg}(u)$, is defined as the cardinality of the set $N(u)$. 
\smallskip

\noindent
Two edges $e_1 = u_1 v_1$ and $e_2 = u_2 v_2$ of a connected graph $G$ are in relation $\Theta$, $e_1 \Theta e_2$, if
$$d(u_1,u_2) + d(v_1,v_2) \neq d(u_1,v_2) + d(u_2,v_1).$$
Note that this relation is also known as Djokovi\' c-Winkler relation.
The relation $\Theta$ is reflexive and symmetric, but not necessarily transitive.
We denote its transitive closure (i.e.\ the smallest transitive relation containing $\Theta$) by $\Theta^*$. The following easy observations will be useful:
\begin{itemize}
\item any two diametrically opposite edges in an even cycle are in relation $\Theta$,
\item any two edges in an odd cycle are in relation $\Theta^*$.
\end{itemize}
An important family of graphs, which is closely related to relation $\Theta$ and include many chemical graphs, are so-called partial cubes. Note that a connected graph is a partial cube if and only if it is bipartite and $\Theta = \Theta^*$. For the definition of a partial cube and other information on these graphs see \cite{klavzar-book}.
\smallskip

\noindent
Let $ \mathcal{F} = \lbrace F_1, \ldots, F_r \rbrace$ be the $\Theta^*$-partition of the set $E(G)$. Then we say that a partition $\mathcal{E} = \lbrace E_1, \ldots, E_k \rbrace$ of $E(G)$ is \textit{coarser} than $\mathcal{F}$
if each set $E_i$ is the union of one or more $\Theta^*$-classes of $G$. In such a case, $\mathcal{E}$ is also called a \textit{c-partition} of the set $E(G)$.
\bigskip

\noindent
Suppose $G$ is a graph and $F \subseteq E(G)$. The \textit{quotient graph} $G / F$ is a graph whose vertices are connected components of the graph $G \setminus F$, such that two components $X$ and $Y$ are adjacent in $G / F$ if some vertex in $X$ is
adjacent to a vertex of $Y$ in $G$. Note that $G \setminus F$ denotes the graph obtained from $G$ by removing all the edges in $F$. Moreover, if $E=XY$ is an edge in $G/F$, then we denote by $\widehat{E}$ the set of edges of $G$ that have one end vertex in $X$ and the other end vertex in $Y$, i.e. $\widehat{E}=  \lbrace xy \in E(G)\,|\,x \in V(X), y \in V(Y) \rbrace $. 
\bigskip

\noindent
Let $G$ be a connected graph, $\lbrace E_1, \ldots, E_k \rbrace$ a c-partition of the set $E(G)$, and $G/E_i$, $i \in \lbrace 1, \ldots, k \rbrace$, the corresponding quotient graph. We define the function $\ell_i: V(G) \rightarrow V(G/E_i)$ as follows: for any $u \in V(G)$, let $\ell_i(u)$ be the connected component $U$ of the graph $G \setminus E_i$ such that $u \in V(U)$.

\baselineskip=16pt

\section{General Szeged-like topological index of a strength-weighted graph}

\par The concept of a \textit{strength-weighted graph} was firstly introduced in \cite{aroc} as a triple $ G_{sw}= (G,SW_V,SW_E)$ where $ G $ is a simple graph and $ SW_V $, $ SW_E $ are pairs of weighted functions defined on $V(G)$ and $E(G)$, respectively:
\begin{itemize}
\item $ SW_V= (w_v,s_v)$, where $w_v , s_v : V(G_{sw}) \rightarrow {\mathbb R}_0^+$,
\item $ SW_E=(w_e,s_e)$, where $w_e, s_e: E(G_{sw}) \rightarrow {\mathbb R}_0^+$.
\end{itemize}


\noindent
For an edge $e=uv\in E(G)$ of a connected graph $G$, we define the following sets:
\begin{eqnarray*}
N_u(e|G) & = & \lbrace x \in V(G) \ | \ d_G(u,x) < d_G(v,x) \rbrace, \\
N_v(e|G) & = & \lbrace x \in V(G) \ | \ d_G(v,x) < d_G(u,x) \rbrace,\\
N_0(e|G) & = & \lbrace x \in V(G) \ | \ d_G(u,x) = d_G(v,x) \rbrace,\\
M_u(e|G) & = & \lbrace f \in E(G) \ | \ d_G(u,f) < d_G(v,f) \rbrace,\\
M_v(e|G) & = & \lbrace f \in E(G) \ | \ d_G(v,f) < d_G(u,f) \rbrace, \\
M_0(e|G) & = & \lbrace f \in E(G) \ | \ d_G(u,f) = d_G(v,f) \rbrace. 
\end{eqnarray*}

\noindent
Finally, for an edge $e=uv \in E(G_{sw})$ of a connected strength-weighted graph $G_{sw}$, we define the following quantities:
\begin{eqnarray*}
n_{u}(e\rvert G_{sw}) &= & \sum \limits_{x \in N_{u}(e \rvert G_{sw})}w_{v}(x), \\ 
m_{u}(e\rvert G_{sw}) &= &\sum \limits_{x \in N_{u}(e \rvert G_{sw})}s_{v}(x)+\sum \limits_{f \in M_{u}(e \rvert G_{sw})}s_{e}(f),\\
n_0(e\rvert G_{sw}) &=& \sum \limits_{x \in N_0(e \rvert G_{sw})}w_{v}(x),\\
m_0(e\rvert G_{sw}) &=& \sum \limits_{x \in N_0(e \rvert G_{sw})}s_{v}(x)+\sum \limits_{f \in M_0(e \rvert G_{sw})}s_{e}(f).
\end{eqnarray*}

\noindent Moreover, the values $n_{v}(e\rvert G_{sw})$ and $m_{v}(e\rvert G_{sw})$ are defined analogously.
\bigskip

To formally introduce a general Szeged-like topological index, the concept of a regular function of six variables is firstly needed.

\noindent
Let $X \subseteq \mathbb{R}^6$ and let $F: X \rightarrow \mathbb{R}$ be a function of six variables such that  $$F(x_1,x_2,x_3,x_4,x_5,x_6) = F(x_2,x_1,x_4,x_3,x_5,x_6)$$
 for all $(x_1,x_2,x_3,x_4,x_5,x_6) \in X$. With other words, $F$ is symmetric in the first two coordinates and in the next two coordinates.  Moreover, for any edge $e=uv$ of a connected strength-weighted graph $G_{sw}$ we introduce the following notation:
$$F(e|G_{sw}) = F \big(n_u(e|G_{sw}), n_v(e|G_{sw}), m_u(e|G_{sw}), m_v(e|G_{sw}), n_0(e|G_{sw}), m_0(e|G_{sw}) \big).$$
We always assume that the number $F(e|G_{sw})$ is well defined for any edge $e \in E(G)$. 
A function $F$ satisfying the mentioned requirements will be called a \textit{regular function} for a graph $G_{sw}$. We remark that $F$ should be symmetric because any edge $e=uv \in E(G_{sw})$ can be also written as $e=vu$.

Now everything is prepared to define the general Szeged-like topological index. 
\begin{definition}
If $F$ is a regular function for a strength-weighted connected graph $G_{sw}$, then the \textbf{Szeged-like topological index} of $G_{sw}$, denoted by $TI_F(G_{sw})$,  is defined as
$$TI_F(G_{sw}) = \sum_{e \in E(G)} w_e(e) F(e|G_{sw}).$$
\end{definition}

Obviously, many well-known distance-based topological indices are just special cases of the general Szeged-like topological index. To show this, let $G$ be a connected graph. We obtain the strength-weighted graph $G_{sw}$ in the following way:  we set $w_{v} \equiv 1$, $s_{e} \equiv 1$, and $s_{v} \equiv 0$. Moreover,  weight $w_e(e)$, where $e=uv \in E(G)$, function $F$, and the corresponding topological index are shown in Table \ref{tabela1}.

\begin{table}[h]
\begin{center}
\begin{tabular}{|c|c|c|c|c|c|}
 \hline 
\textbf{ topological index} & \textbf{regular function} $F$ & $w_e (uv)$  \\ 
 \hline 
 Szeged index ($Sz$) & $x_1x_2$ & 1  \\ 
 \hline 
 edge-Szeged index ($Sz_e$)& $x_3x_4$ & 1  \\ 
  \hline 
revised Szeged index ($Sz^*$)& $(x_1 + x_5/2)(x_2 + x_5/2)$ & 1  \\ 
  \hline
revised edge-Szeged index ($Sz_e^*$)& $(x_3 + x_6/2)(x_4 + x_6/2)$ & 1  \\ 
 \hline 
 vertex-edge Szeged index ($Sz_{ve}$) & $x_1x_4 + x_2x_3$ & 1 \\ 
 \hline 
 total Szeged index ($Sz_{t}$) & $(x_1 + x_3)(x_2 + x_4)$ & 1  \\ 
  \hline 
weighted-plus Szeged index ($w^+Sz$) & $x_1x_2$ & $\deg (u) + \deg (v)$ \\ 
  \hline 
weighted-product Szeged index ($w^*Sz$) & $x_1x_2$ & $\deg (u)  \deg (v)$ \\ 
 \hline 
weighted-plus edge-Sz.\,index ($w^+Sz_e$) & $x_3x_4$ & $\deg (u) + \deg (v)$ \\ 
  \hline 
weighted-prod.\,edge-Sz.\,index ($w^*Sz_e$) & $x_3x_4$ & $\deg (u)  \deg (v)$ \\ 
 \hline 
weighted-plus total-Sz.\,index ($w^+Sz_t$) & $(x_1 + x_3)(x_2 + x_4)$ & $\deg (u) + \deg (v)$ \\ 
  \hline 
weighted-prod.\,total-Sz.\,index ($w^*Sz_t$) & $(x_1 + x_3)(x_2 + x_4)$ & $\deg (u)  \deg (v)$ \\ 
 \hline 
 (edge-)PI index ($PI$) & $x_3 + x_4$ & 1  \\ 
 \hline 
 vertex-PI index ($PI_v$)& $x_1 + x_2$ & 1  \\ 
 
 \hline 
 total PI index ($PI_{t}$) & $x_1 + x_2 + x_3 + x_4$ & 1  \\ 
  \hline 
weighted-plus PI index ($w^+PI$) & $x_3 + x_4$ & $\deg (u) + \deg (v)$ \\ 
  \hline 
weighted-product PI index ($w^*PI$) & $x_3 + x_4$ & $\deg (u)  \deg (v)$ \\ 
 \hline 
weighted-plus vertex-PI index ($w^+PI_v$) & $x_1 + x_2$ & $\deg (u) + \deg (v)$ \\ 
  \hline 
weighted-prod.\,vertex-PI index ($w^*PI_v$) & $x_1 + x_2$ & $\deg (u)  \deg (v)$ \\ 
 \hline 
Mostar index ($Mo$) & $|x_1 - x_2|$ & 1  \\ 
 \hline 
 edge-Mostar index ($Mo_e$)& $|x_3 - x_4|$ & 1  \\ 
 \hline 
 total Mostar index ($Mo_{t}$) & $|x_1 + x_3 - x_2 - x_4|$ & 1  \\ 
  \hline 
weighted-plus Mostar index ($w^+Mo$) & $|x_1 - x_2|$ & $\deg (u) + \deg (v)$ \\ 
  \hline 
weighted-product Mostar index ($w^*Mo$) & $|x_1 - x_2|$ & $\deg (u)  \deg (v)$ \\ 
 \hline 
weighted-plus edge-Mo.\,index ($w^+Mo_e$) & $|x_3 - x_4|$ & $\deg (u) + \deg (v)$ \\ 
  \hline 
weighted-prod.\,edge-Mo.\,index ($w^*Mo_e$) & $|x_3 - x_4|$ & $\deg (u)  \deg (v)$ \\ 
  \hline 
weighted-plus total-Mo.\,index ($w^+Mo_t$) & $|x_1 + x_3 - x_2 - x_4|$ & $\deg (u) + \deg (v)$ \\ 
  \hline 
weighted-prod.\,total-Mo.\,index ($w^*Mo_t$) & $|x_1 + x_3 - x_2 - x_4|$ & $\deg (u)  \deg (v)$ \\ 
 \hline 
 \end{tabular} 
 \end{center} 
 \caption{ \label{tabela1} Topological indices, corresponding functions, and weights $w_e$.}
 \end{table}
 
 Based on the above discussion and Table \ref{tabela1}, we say that a strength-weighted graph $G_{sw}$ is \textit{normally strength-weighted}, if $w_{v} \equiv 1$, $s_{e} \equiv 1$, $s_{v} \equiv 0$, and for $w_e$ we have one of the following options:
 \begin{itemize}
 \item [$(i)$] $w_e(e) \equiv 1$,
 \item [$(ii)$] $w_e(e) = \deg (u) + \deg (v)$ for any $e=uv$ (in this case, we often use $w_e^+(e)$),
 \item [$(iii)$] $w_e(e) = \deg (u) \deg (v)$ for any $e=uv$ (in this case, we often use $w_e^*(e)$).
 \end{itemize}
 
 However, our general definition of the Szeged-like topological index includes also infinitely many other topological indices, which are more complicated and are not presented in Table \ref{tabela1}. To show this, we will, as an example, consider the \textit{weighted-plus revised edge-Szeged index},
 $$w^+Sz_e^*(G) = \sum_{e =uv \in E(G)} \left( \deg(u) + \deg(v) \right)\left( m_u + \frac{m_0}{2} \right) \left( m_v + \frac{m_0}{2} \right),$$
and the \textit{square vertex-PI index},
 
 $$PI_v^s(G) = \sum_{e =uv \in E(G)} \left( n_u^2 + n_v^2 \right).$$
 
\noindent
Obviously, for the first index we assume that $F=(x_3 + x_6/2)(x_4 + x_6/2)$ and $w_e(uv) = \deg (u) + \deg (v)$, while for the second index we have $F= x_1^2 + x_2^2$ and $w_e \equiv 1$.

 
  In the rest of the section, we show that the Szeged-like topological index of a tree can be computed in linear time. Let $F$ be a regular function for a graph $G_{sw}$. In the next lemma and in all other computational results of this paper, we always assume that for every edge $e \in E(G_{sw})$, the number $F(e|G_{sw})$ can be evaluated in constant time $O(1)$. To emphasis that we work in such a model, any function $F$ satisfying this condition will be called a \textit{normal function} for  graph $G_{sw}$. The mentioned assumption is usually made if $F$ can be expressed by a fixed number of basic arithmetic and logic operations (for example all functions from Table \ref{tabela1}).
 
 \begin{lemma} \label{trees_linear1}
Let $T_{sw}$ be a  strength-weighted tree with $n$ vertices. If $F$ is a normal function for $T_{sw}$, then the index $TI_F(T_{sw})$ can be computed in $O(n)$ time.
\end{lemma}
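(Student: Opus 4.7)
The plan is to exploit the very special distance geometry of a tree: for any edge $e=uv$ of $T$, deleting $e$ splits $T$ into exactly two connected components, one containing $u$ and the other containing $v$. A vertex $x$ lies in $N_u(e|T)$ iff it is in the component of $u$ (and symmetrically), so $N_0(e|T)=\emptyset$ and therefore $n_0(e|T_{sw})=0$. Similarly, every edge $f\neq e$ has both endpoints in a single component and is classified accordingly, while $e$ itself satisfies $d_T(u,e)=d_T(v,e)=0$; hence $M_0(e|T)=\{e\}$ and $m_0(e|T_{sw})=s_e(e)$. Once the two components are known, $n_u,n_v,m_u,m_v$ reduce to sums of $w_v$, $s_v$ and $s_e$ over those components.

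First, I would root $T_{sw}$ at an arbitrary vertex $r$. For every vertex $v$ let $T_v$ denote the subtree hanging from $v$, and define
\[
W(v)=\sum_{x\in V(T_v)} w_v(x),\qquad S(v)=\sum_{x\in V(T_v)} s_v(x)+\sum_{f\in E(T_v)} s_e(f).
\]
Both quantities satisfy an immediate recurrence over children, so a single post-order traversal computes $W(v)$ and $S(v)$ for all $v$ in total time $O(n)$. In the same traversal I would also accumulate the global totals $W_{\mathrm{tot}}=\sum_{x} w_v(x)$, $S^V_{\mathrm{tot}}=\sum_x s_v(x)$ and $S^E_{\mathrm{tot}}=\sum_f s_e(f)$.

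Next, for each non-root vertex $v$ with parent $u$, the corresponding edge $e=uv$ satisfies
\[
n_v(e|T_{sw})=W(v),\ \ n_u(e|T_{sw})=W_{\mathrm{tot}}-W(v),
\]
\[
m_v(e|T_{sw})=S(v),\ \ m_u(e|T_{sw})=S^V_{\mathrm{tot}}+S^E_{\mathrm{tot}}-s_e(e)-S(v),
\]
together with $n_0(e|T_{sw})=0$ and $m_0(e|T_{sw})=s_e(e)$. Each of these six values is obtained in $O(1)$ from the precomputed tables, and because $F$ is a normal function the quantity $F(e|T_{sw})$ is then evaluated in $O(1)$ as well. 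Summing $w_e(e)\,F(e|T_{sw})$ over the $n-1$ edges produces $TI_F(T_{sw})$ in further time $O(n)$, giving the claimed $O(n)$ overall complexity.

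There is no real obstacle here; the only subtlety is bookkeeping the convention correctly, in particular remembering that the edge $e$ itself belongs to $M_0(e|T)$ and must be excluded from $m_u$ and $m_v$. All other steps are routine tree dynamic programming.
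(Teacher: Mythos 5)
Your proof is correct and is essentially the argument the paper has in mind: the paper's own proof simply defers to Lemma 4.1 of the cited reference (the same rooted-tree dynamic programming with subtree sums, adapted to the four weights $w_v,s_v,w_e,s_e$), whereas you have written that computation out explicitly, including the correct observations that $N_0(e|T)=\emptyset$ and $M_0(e|T)=\{e\}$.
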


\begin{proof}
Since $F$ is a normal function, the proof can be done in a similar way as in Lemma 4.1 from \cite{tratnik_edge_sz}. The only difference is that we should consider four weights instead of just two. \qed 
\end{proof}

\section{A general cut method}

In this section, we prove a method for computing any Szeged-like topological index of a connected strength-weighted graph from the corresponding quotient graphs. In this way, we greatly generalize many previous results.

Let $G_{sw}$ be a connected strength-weighted graph and $ \{E_{1},\ldots,E_{k}\}$ be a c-partition of $E(G)$. Moreover, for $i \in \lbrace 1, \ldots, k \rbrace$ let $G_{sw}/E_i=(G/E_i,SW_v^i, SW_e^i) $ be the strength-weighted quotient graph, where the weights $w_v^i$, $s_v^i$, $w_e^i$, and $s_e^i$ are defined as follows \cite{aroc,aroc2}:  
\begin{itemize}
	\item $w_{v}^{i} : V(G_{sw}/E_{i}) \rightarrow \mathbb{R}_0^{+}$,\ $w_{v}^{i}(X)  = \sum \limits_{x \in V(X)} w_{v}(x)$, $\forall$ $X \in V(G_{sw}/E_{i})$,
	\item $s_{v}^{i} : V(G_{sw}/E_{i}) \rightarrow \mathbb{R}_0^{+}$,\ $s_{v}^{i}(X) = \sum \limits_{f \in E(X)} s_{e}(f) \ +\sum \limits_{x \in V(X)} s_{v}(x) $, $\forall$ $X \in V(G_{sw}/E_{i})$,
	\item $w_{e}^{i} : E(G_{sw}/E_{i}) \rightarrow {\mathbb R}_0^+$,\
			  $w_{e}^{i}(E)= \sum \limits_{e \in \widehat{E}}w_{e}(e)$, $ \forall \ E \in E(G_{sw}/E_{i})$,
	\item $s_{e}^{i} : E(G_{sw}/E_{i}) \rightarrow \mathbb{R}_0^{+}$,\ $s_{e}^{i}(E) = \sum \limits_{e \in \widehat{E}}s_{e}(e), \forall \ E \in E(G_{sw}/E_{i})$.
\end{itemize}

Moreover, it is easy to see that if $G_{sw}$ is a normally strength-weighted graph, then the following holds:
\begin{itemize}
	\item  $w_{v}^{i}(X)$ is the number of vertices in a connected component  $X$ of $G_{sw} \setminus E_{i}$,
	\item $s_{v}^{i}(X)$ is the number of edges in a connected component  $X$ of $G_{sw} \setminus E_{i}$,
	
	\item $s_{e}^{i}(E)= \left| \widehat{E} \right|$ for $E \in E(G_{sw}/E_{i})$. In other words, if $E=XY$, then $s_{e}^{i}(E)$ is the number of edges between connected components $X$ and $Y$ of $G_{sw} \setminus E_{i}$.
	\item for $w_e^i(E)$, where $E \in E(G_{sw}/E_{i})$, we have one of the following cases:
	\begin{itemize}
	\item [$(i)$] if $w_e \equiv 1$, then $w_e^i(E) = s_{e}^{i}(E)$,
	\item [$(ii)$] if $w_e=w_e^+$, then $ w_e^i(E) = \displaystyle\sum_{e=uv \in \widehat{E}} \left( \deg(u) + \deg(v) \right)$,
	\item [$(iii)$] if $w_e = w_e^*$, then $w_e^i(E) = \displaystyle\sum_{e=uv \in \widehat{E}}  \deg(u) \deg(v) $.
	\end{itemize}
\end{itemize}

\par Throughout the paper, the quotient graph $G_{sw}/E_i$ will be shortly denoted as $G_i$ for any $i \in \lbrace 1, \ldots, k \rbrace$.
Firstly, we need two lemmas.

\begin{lemma} \cite{tratnik_mostar,tratnik_weighted_sz} \label{pomoc0}
Let $G$ be a connected graph. If $e =uv \in E_i$, where $i \in \lbrace 1, \ldots, k \rbrace$, then $U=\ell_i(u)$ and $V=\ell_i(v)$ are adjacent vertices in $G_i$, i.e.\ $E=UV \in E(G_i)$. Moreover,

\begin{eqnarray*}
&N_u(e|G) &= \bigcup_{X \in N_{U}(E|G_i)} V(X), \\
 &N_v(e|G) &= \bigcup_{X \in N_{V}(E|G_i)} V(X),\\
&M_u(e|G) &= \left( \bigcup_{X \in N_{U}(E|G_i)} E(X) \right) \bigcup \left( \bigcup_{F \in M_{U}(E|G_i)} \widehat{F} \right),\\
&M_v(e|G) &= \left( \bigcup_{X \in N_{V}(E|G_i)} E(X) \right) \bigcup \left( \bigcup_{F \in M_{V}(E|G_i)} \widehat{F} \right).
\end{eqnarray*}
\end{lemma}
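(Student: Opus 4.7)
The plan is to first verify that $U$ and $V$ are distinct components of $G \setminus E_i$ and then to establish the four set identities by relating distances in $G$ to distances in the quotient $G_i$.

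First I would note that because $E_i$ is a union of $\Theta^*$-classes, removing $E_i$ separates the two endpoints of every edge of $E_i$; this is the defining property of $\Theta^*$-cuts that legitimises the cut method. In particular $u$ and $v$ lie in distinct components $U \ne V$, and the edge $e = uv$ witnesses that $E = UV$ is an edge of $G_i$.

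The heart of the argument is a projection principle: for any vertex $x \in V(G)$ with $X = \ell_i(x)$, the inequality $d_G(u,x) < d_G(v,x)$ holds if and only if $d_{G_i}(U,X) < d_{G_i}(V,X)$. To prove this I would (a) lift a shortest $U$--$X$ path in $G_i$ to a walk in $G$ by traversing each component along an internal shortest path and crossing between components through a single edge of $E_i$, and (b) project any shortest $u$--$x$ path in $G$ onto $G_i$ by contracting each maximal subpath that lies inside a single component and keeping only the crossings of $E_i$. The key input is that each $\Theta^*$-class acts as an isometric cut, so a shortest $u$--$x$ path in $G$ crosses $E_i$ exactly $d_{G_i}(U,X)$ times. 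Consequently the sign of $d_G(u,x) - d_G(v,x)$ agrees with the sign of $d_{G_i}(U,X) - d_{G_i}(V,X)$, and taking unions of the vertex sets $V(X)$ over the appropriate $X$ yields the first two equalities of the lemma.

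For the edge identities, I would partition $E(G)$ into two families: edges lying inside a single component $X$ of $G \setminus E_i$, and edges of the form $\widehat{F}$ for some $F = XY \in E(G_i)$. For an internal edge $f = x_1 x_2 \in E(X)$, combining the identity $d_G(u,f) = \min\{d_G(u,x_1), d_G(u,x_2)\}$ with the projection principle applied to each endpoint shows that $d_G(u,f) < d_G(v,f)$ is equivalent to $X \in N_U(E|G_i)$, which handles the first term of the union in the description of $M_u(e|G)$. For an edge $f \in \widehat{F}$ with $F = XY$, an analogous distance comparison (now applied to both endpoints of $f$, lying in different components) reduces $d_G(u,f) < d_G(v,f)$ to the condition $F \in M_U(E|G_i)$, supplying the second term; the same reasoning symmetric in $u,v$ yields the companion identity for $M_v(e|G)$.

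The main obstacle is verifying the projection principle cleanly, particularly the existence of a shortest $u$--$x$ path in $G$ that crosses $E_i$ exactly $d_{G_i}(U,X)$ times. This is where the $\Theta^*$-structure is used essentially: a direct proof from scratch would require careful manipulation of geodesics using the Djoković--Winkler characterisation of $\Theta$, whereas one can instead invoke standard results on convex/isometric cuts in connected graphs to shorten the argument.
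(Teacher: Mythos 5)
First, a point of reference: the paper does not prove this lemma at all --- it is imported by citation from \cite{tratnik_mostar,tratnik_weighted_sz} --- so your proposal can only be measured against the arguments in those sources. Your outline does follow the same route: show that $U\neq V$ are adjacent in $G_i$, establish a projection principle for vertex distances, and then split $E(G)$ into intra-component edges and crossing edges. The opening step is fine: the endpoints of any edge lie in distinct components of $G$ minus the $\Theta^*$-class of that edge, hence also of $G\setminus E_i$, and $e$ itself witnesses the adjacency of $U$ and $V$.

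However, two steps are under-justified as written. (i) From your stated key input --- that a shortest $u$--$x$ path crosses $E_i$ exactly $d_{G_i}(U,X)$ times --- you can deduce that $d_G(u,x)<d_G(v,x)$ implies $d_{G_i}(U,X)<d_{G_i}(V,X)$ (append $e$ to a shortest $u$--$x$ path) and its mirror image, but \emph{not} the tie case: nothing you have written rules out $d_G(u,x)=d_G(v,x)$ while $d_{G_i}(U,X)\neq d_{G_i}(V,X)$, and without that the three-way partition into $N_u$, $N_v$, $N_0$ is not controlled. The clean fix is the stronger fact underlying the cut method (Klav\v zar--Nadjafi-Arani): the map $x\mapsto(\ell_1(x),\dots,\ell_k(x))$ is an isometric embedding into the Cartesian product of the quotient graphs, so $d_G(u,x)-d_G(v,x)=d_{G_i}(U,X)-d_{G_i}(V,X)$ holds \emph{exactly}, because $\ell_j(u)=\ell_j(v)$ for every $j\neq i$. (ii) For a crossing edge $f=x_1x_2\in\widehat{F}$ with $F=XY$, the comparison is not merely ``analogous'': the two endpoints lie in different components, the differences $d_G(u,x_1)-d_G(v,x_1)$ and $d_G(u,x_2)-d_G(v,x_2)$ may have opposite signs, and a sign-only argument cannot compare $\min\lbrace d_G(u,x_1),d_G(u,x_2)\rbrace$ with $\min\lbrace d_G(v,x_1),d_G(v,x_2)\rbrace$. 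Here one genuinely needs the exact difference identity from (i), together with the fact that distances from a fixed vertex to the two endpoints of an edge (in $G$ and in $G_i$) differ by at most one, followed by a short case analysis. These are precisely the points the cited lemmas carry; invoking them wholesale is legitimate, but the proposal as written asserts these reductions rather than supplying them.
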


\begin{lemma} \label{pomoc} \cite{aroc_cle_trat} Let $G_{sw}$ be a connected strength-weighted graph. If $e=uv \in E_i$, where $i \in \lbrace 1, \ldots, k \rbrace$, $U=\ell_i(u)$, $V=\ell_i(v)$, and $E= UV \in E(G_i)$, then \\[1em]
\begin{tabular}{l l l}
\textnormal{(\textit{i})} $n_u(e|G_{sw})= n_U(E|G_i)$&\textit{ and  }&$n_v(e|G_{sw})= n_V(E|G_i)$,\\[1em]
\textnormal{(\textit{ii})} $m_u(e|G_{sw})= m_U(E|G_i)$&\textit{ and  }&$m_v(e|G_{sw})= m_V(E|G_i)$.
\end{tabular}\smallskip
\end{lemma}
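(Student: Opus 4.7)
The plan is to reduce both (i) and (ii) to direct bookkeeping on top of Lemma~\ref{pomoc0} together with the definitions of the quotient weights $w_v^i, s_v^i, s_e^i$ and of the quantities $n_u, n_v, m_u, m_v$. First I would observe that the set identities in Lemma~\ref{pomoc0} are in fact disjoint unions: distinct connected components of $G\setminus E_i$ have disjoint vertex sets and disjoint edge sets, and for distinct quotient-edges $F\in E(G_i)$ the fibres $\widehat{F}$ are pairwise disjoint subsets of $E(G)$. This disjointness is what legitimises splitting each sum over $N_u(e|G)$ or $M_u(e|G)$ into an iterated sum indexed first by the components $X\in N_U(E|G_i)$ (and, in the edge case, by the quotient-edges $F\in M_U(E|G_i)$), then by the elements inside each piece.

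For (i), starting from the definition I write $n_u(e|G_{sw})=\sum_{x\in N_u(e|G)} w_v(x)$, substitute the first identity of Lemma~\ref{pomoc0}, interchange the sums using disjointness, and recognise the inner sum as $w_v^i(X)$; this yields $n_u(e|G_{sw})=\sum_{X\in N_U(E|G_i)} w_v^i(X)=n_U(E|G_i)$. The argument for $n_v(e|G_{sw})=n_V(E|G_i)$ is identical with $U$ replaced by $V$.

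For (ii), I split $m_u(e|G_{sw})$ into its two constituents. The vertex-weight part reduces exactly as in (i) but with $s_v$ in place of $w_v$, giving $\sum_{X\in N_U(E|G_i)}\sum_{x\in V(X)} s_v(x)$. For the edge-weight part I apply the third identity of Lemma~\ref{pomoc0}: the set $M_u(e|G)$ partitions into the intra-component edges $\bigcup_{X\in N_U(E|G_i)} E(X)$ and the inter-component edges $\bigcup_{F\in M_U(E|G_i)} \widehat{F}$, so summing $s_e$ over $M_u(e|G)$ gives $\sum_{X}\sum_{f\in E(X)} s_e(f) \,+\, \sum_{F} s_e^i(F)$. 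Regrouping the $X$-indexed terms with the vertex-weight part, the inner bracket becomes $\sum_{f\in E(X)} s_e(f)+\sum_{x\in V(X)} s_v(x)=s_v^i(X)$, and the total equals $\sum_{X} s_v^i(X)+\sum_{F} s_e^i(F)=m_U(E|G_i)$. The identity $m_v(e|G_{sw})=m_V(E|G_i)$ follows symmetrically.

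I do not anticipate a conceptual obstacle; the one point requiring care is the disjointness accounting in (ii), which must cleanly attribute every edge of $G$ counted by $M_u(e|G)$ either to the $s_v^i$-contribution of some component (if the edge lies inside a component) or to the $s_e^i$-contribution of some quotient edge (if the edge crosses between components). Once this partition is made explicit, both equalities drop out by direct substitution of the quotient-weight definitions.
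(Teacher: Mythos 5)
Your proposal is correct, and it is exactly the standard argument: the paper itself only cites Lemma~\ref{pomoc} from \cite{aroc_cle_trat} without reproving it, but your derivation coincides step for step with the proof the paper does give for the companion Lemma~\ref{pomoc1} (substitute the set identities of Lemma~\ref{pomoc0}, use disjointness of components and fibres to interchange sums, and recognise the inner sums as the quotient weights $w_v^i$, $s_v^i$, $s_e^i$). No gaps; your explicit attention to the disjoint attribution of intra-component versus inter-component edges in part (ii) is precisely the point that makes the regrouping into $s_v^i(X)$ and $s_e^i(F)$ legitimate.
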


\noindent
The following lemma will be also useful.

\begin{lemma} \label{pomoc1} Let $G_{sw}$ be a connected strength-weighted graph. If $e=uv \in E_i$, where $i \in \lbrace 1, \ldots, k \rbrace$, $U=\ell_i(u)$, $V=\ell_i(v)$, and $E= UV \in E(G_i)$, then

$$ n_0(e|G_{sw}) = n_0(E|G_i) \ \textit{  and  } \ m_0(e|G_{sw}) = m_0(E|G_i). $$
\end{lemma}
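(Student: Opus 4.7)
The plan is to mirror the structure of the two preceding lemmas: first establish set-theoretic identities expressing $N_0(e|G_{sw})$ and $M_0(e|G_{sw})$ in terms of objects in the quotient graph $G_i$, and then sum the appropriate weights. The key observation is that I do not need any fresh distance-chasing argument about equidistant vertices or edges, because the ``zero'' identities can be obtained for free by complementation from what Lemma~\ref{pomoc0} already asserts.

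For the vertex identity, note that the connected components of $G\setminus E_i$ induce a partition $V(G)=\bigcup_{X\in V(G_i)}V(X)$ into disjoint sets, and $\{N_U(E|G_i),\,N_V(E|G_i),\,N_0(E|G_i)\}$ is a partition of $V(G_i)$. Since $\{N_u(e|G),\,N_v(e|G),\,N_0(e|G)\}$ is a partition of $V(G)$, and since Lemma~\ref{pomoc0} identifies $N_u(e|G)$ with $\bigcup_{X\in N_U(E|G_i)}V(X)$ and $N_v(e|G)$ with $\bigcup_{X\in N_V(E|G_i)}V(X)$, a complement argument inside $V(G)$ forces
$$N_0(e|G)=\bigcup_{X\in N_0(E|G_i)}V(X).$$
Substituting this into the definition of $n_0(e|G_{sw})$ and using $w_v^i(X)=\sum_{x\in V(X)}w_v(x)$ then gives $n_0(e|G_{sw})=n_0(E|G_i)$ at once.

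For the edge identity I would run exactly the same argument one level up. The edge set of $G$ decomposes disjointly as $E(G)=\bigl(\bigcup_{X\in V(G_i)}E(X)\bigr)\cup\bigl(\bigcup_{F\in E(G_i)}\widehat{F}\bigr)$, and $E(G_i)$ is partitioned into $M_U(E|G_i)\cup M_V(E|G_i)\cup M_0(E|G_i)$. Combining these with the $M_u$ and $M_v$ descriptions from Lemma~\ref{pomoc0}, complementation inside $E(G)$ yields
$$M_0(e|G)=\left(\bigcup_{X\in N_0(E|G_i)}E(X)\right)\cup\left(\bigcup_{F\in M_0(E|G_i)}\widehat{F}\right).$$
Summing $s_v$ over $N_0(e|G)$ and $s_e$ over $M_0(e|G)$, and then regrouping contributions by the component $X\in N_0(E|G_i)$ (each giving exactly $s_v^i(X)=\sum_{x\in V(X)}s_v(x)+\sum_{f\in E(X)}s_e(f)$) and by the crossing class $F\in M_0(E|G_i)$ (each giving exactly $s_e^i(F)=\sum_{f\in\widehat{F}}s_e(f)$), collapses the total into $m_0(E|G_i)$.

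The one point where things could go wrong is the $M_0$ identity, since a direct proof from distance definitions would need to show that an edge equidistant from $u$ and $v$ in $G$ must either lie inside a component $X$ with $X\in N_0(E|G_i)$ or cross a class $F\in M_0(E|G_i)$, and that every such edge qualifies\,---\,an argument that can get delicate for a general c-partition. My plan avoids this entirely by leaning on the partition structure together with Lemma~\ref{pomoc0}, so the remaining work is the pure bookkeeping of seeing that the $w_v$, $s_v$ and $s_e$ sums regroup into $w_v^i$, $s_v^i$ and $s_e^i$.
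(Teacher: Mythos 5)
Your proposal is correct and follows essentially the same route as the paper's proof: both derive the identity for $N_0(e|G)$ (and then for $M_0(e|G)$) by complementation from the descriptions of $N_u$, $N_v$, $M_u$, $M_v$ in Lemma~\ref{pomoc0} together with the partition structure of $V(G)$, $E(G)$, $V(G_i)$, $E(G_i)$, and then regroup the weight sums into $w_v^i$, $s_v^i$, $s_e^i$. No substantive difference.
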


\begin{proof} By Lemma \ref{pomoc0} we have
$$N_u(e|G) = \left( \bigcup_{X \in N_{U}(E|G_i)} V(X) \right) \textrm{ and } N_v(e|G) = \left( \bigcup_{X \in N_{V}(E|G_i)} V(X) \right).$$
Obviously, the set $\lbrace N_u(e|G), N_v(e|G), N_0(e|G) \rbrace$ is a partition of the set $V(G)$. Moreover, the set $\lbrace N_U(E|G_i), N_V(E|G_i), N_0(E|G_i) \rbrace$ is a partition of the set $V(G_i)$. Therefore, by the above formulas it follows
\begin{equation} \label{enacba1} N_0(e|G) = \left( \bigcup_{X \in N_{0}(E|G_i)} V(X) \right).
\end{equation}

\noindent
Hence, by Equation \eqref{enacba1} we can calculate
\begin{eqnarray*}
n_0(e|G) & = & \sum_{x \in N_0(e|G_{sw})} w_v(x) \\
& = & \sum_{X \in N_0(E|G_i)} \left ( \sum_{x \in V(X)} w_v(x) \right) \\
& = & \sum_{X \in N_0(E|G_i)} w_v^i(X) \\
& = & n_0(E|G_i).
\end{eqnarray*}

Similarly, by Lemma \ref{pomoc0} it holds
$$M_u(e|G) = \left( \bigcup_{X \in N_{U}(E|G_i)} E(X) \right) \bigcup \left( \bigcup_{F \in M_{U}(E|G_i)} \widehat{F} \right),$$
$$M_v(e|G) = \left( \bigcup_{X \in N_{V}(E|G_i)} E(X) \right) \bigcup \left( \bigcup_{F \in M_{V}(E|G_i)} \widehat{F} \right).$$

\noindent
Obviously, the set $\lbrace M_u(e|G), M_v(e|G), M_0(e|G) \rbrace$ is a partition of the set $E(G)$. Moreover, the sets $\lbrace N_U(E|G_i), N_V(E|G_i), N_0(E|G_i) \rbrace$ and $\lbrace M_U(E|G_i), M_V(E|G_i), M_0(E|G_i) \rbrace$ are partitions of the sets $V(G_i)$ and $E(G_i)$, respectively. Therefore, by the above formulas it follows
\begin{equation} \label{enacba2}
M_0(e|G) = \left( \bigcup_{X \in N_{0}(E|G_i)} E(X) \right) \bigcup \left( \bigcup_{F \in M_{0}(E|G_i)} \widehat{F} \right).
\end{equation}

\noindent
Hence, by Equations \eqref{enacba1} and \eqref{enacba2} we can calculate
\begin{eqnarray*}
m_0(e|G_{sw}) & = & \sum_{x \in N_0(e|G_{sw})} s_v(x) + \sum_{f \in M_0(e|G_{sw})} s_e(f) \\
& = & \sum_{X \in N_0(E|G_i)} \left ( \sum_{x \in V(X)} s_v(x) \right) \\
& + & \sum_{X \in N_0(E|G_i)} \left ( \sum_{f \in E(X)} s_e(f) \right)  + \sum_{F \in M_0(E|G_i)} \left ( \sum_{f \in \widehat{F}} s_e(f) \right)  \\
& = & \sum_{X \in N_0(E|G_i)} \left ( \sum_{x \in V(X)} s_v(x) + \sum_{f \in E(X)} s_e(f) \right) \\
& + & \sum_{F \in M_0(E|G_i)} \left ( \sum_{f \in \widehat{F}} s_e(f) \right)  \\
& = & \sum_{X \in N_0(E|G_i)} s_v^i(X) + \sum_{F \in M_0(E|G_i)} s_e^i(F)\\
& = & m_0(E|G_i),
\end{eqnarray*}
which completes the proof. \qed
\end{proof}

\bigskip

\noindent
Based on the obtained results, the next lemma follows easily.

\begin{lemma} \label{pomoc3}
Let $G_{sw}$ be a connected strength-weighted graph and $F$ a regular function for $G_{sw}$. If $e=uv \in E_i$, where $i \in \lbrace 1, \ldots, k \rbrace$, $U=\ell_i(u)$, $V=\ell_i(v)$, and $E= UV \in E(G_i)$, then
$$F(e|G_{sw}) = F(E|G_i).$$
\end{lemma}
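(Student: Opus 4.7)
The statement is a direct six-coordinate substitution, so the plan is essentially to collect the pieces that have already been assembled. By the very definition of $F(e|G_{sw})$ in the paper, the value
$$F(e|G_{sw}) = F\bigl(n_u(e|G_{sw}),\,n_v(e|G_{sw}),\,m_u(e|G_{sw}),\,m_v(e|G_{sw}),\,n_0(e|G_{sw}),\,m_0(e|G_{sw})\bigr),$$
and analogously for $F(E|G_i)$ with the six quantities computed in the quotient graph. So the goal reduces to matching the arguments coordinatewise.

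The plan is to invoke Lemma \ref{pomoc} for the first four arguments and Lemma \ref{pomoc1} for the last two. Concretely, I would state: the hypotheses $e=uv\in E_i$, $U=\ell_i(u)$, $V=\ell_i(v)$, $E=UV\in E(G_i)$ are exactly those of both lemmas, so Lemma \ref{pomoc}(i)--(ii) gives $n_u(e|G_{sw})=n_U(E|G_i)$, $n_v(e|G_{sw})=n_V(E|G_i)$, $m_u(e|G_{sw})=m_U(E|G_i)$, $m_v(e|G_{sw})=m_V(E|G_i)$, and Lemma \ref{pomoc1} gives $n_0(e|G_{sw})=n_0(E|G_i)$, $m_0(e|G_{sw})=m_0(E|G_i)$.

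Substituting these six equalities into the argument list of $F$ and applying the definition on the quotient side yields $F(e|G_{sw})=F(E|G_i)$ at once. There is no real obstacle here: the only subtle point worth a sentence is that $F$ is only assumed to be defined as a numerical function of six real inputs, so the result follows purely from equality of inputs, with no use of the symmetry property of $F$ (which would only become relevant when swapping $u\leftrightarrow v$). Accordingly, the proof will be a three- or four-line display that chains the six identities and concludes.
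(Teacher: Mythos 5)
Your proof is correct and matches the paper's argument exactly: the paper also derives Lemma \ref{pomoc3} by combining Lemma \ref{pomoc} (for the four $n_u,n_v,m_u,m_v$ coordinates) with Lemma \ref{pomoc1} (for the $n_0,m_0$ coordinates) and substituting into the definition of $F(e|G_{sw})$. Your added observation that the symmetry of $F$ is not needed here is a correct and harmless refinement.
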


\begin{proof}
The proof follows by Lemma \ref{pomoc} and Lemma \ref{pomoc1}. \qed
\end{proof}

The main theorem of this paper can now be stated.

\begin{theorem} \label{glavni} Let $G_{sw}$ be a connected strength-weighted graph. If $\lbrace E_1, \ldots, E_k \rbrace$ is a c-partition of $E(G_{sw})$ and $F$ a regular function for $G_{sw}$, then
$$TI_F(G_{sw}) = \sum_{i=1}^k TI_F(G_i).$$
\end{theorem}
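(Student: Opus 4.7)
The plan is to expand $TI_F(G_{sw})$ by its definition, split the outer sum according to the c-partition $\{E_1,\ldots,E_k\}$, regroup the summands in each $E_i$ by the edge of $G_i$ they lift to, and then invoke Lemma~\ref{pomoc3} to replace $F(e|G_{sw})$ by $F(E|G_i)$ on each fiber $\widehat{E}$. The weight pieces should then collapse into $w_e^i(E)$ by definition of the quotient weights.

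First I would use the fact that $\{E_1,\ldots,E_k\}$ partitions $E(G)$ to write
$$TI_F(G_{sw}) = \sum_{e \in E(G)} w_e(e) F(e|G_{sw}) = \sum_{i=1}^k \sum_{e \in E_i} w_e(e) F(e|G_{sw}).$$
Next, since every edge of $E_i$ joins two distinct connected components of $G\setminus E_i$, the set $E_i$ decomposes disjointly as $\bigcup_{E \in E(G_i)} \widehat{E}$, so the inner sum becomes $\sum_{E \in E(G_i)} \sum_{e \in \widehat{E}} w_e(e) F(e|G_{sw})$. By Lemma~\ref{pomoc3}, for any $e=uv \in \widehat{E}$ with $U=\ell_i(u)$, $V=\ell_i(v)$, and $E=UV$, we have $F(e|G_{sw}) = F(E|G_i)$; hence $F(e|G_{sw})$ is constant on $\widehat{E}$ and can be pulled out of the innermost sum. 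Combining this with the definition $w_e^i(E) = \sum_{e \in \widehat{E}} w_e(e)$, each fiber sum equals $w_e^i(E)\,F(E|G_i)$, and summing over $E \in E(G_i)$ produces precisely $TI_F(G_i)$; a final summation over $i$ closes the argument.

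I do not expect a substantial obstacle, since Lemmas~\ref{pomoc}, \ref{pomoc1}, and consequently \ref{pomoc3} have already done the real work of showing that the six quantities feeding $F$ are preserved under the quotient construction. What remains is a purely combinatorial reindexing. The only subtlety worth flagging is that $\widehat{E}$ and the value $F(E|G_i)$ must be independent of the orientation of the representative edge $e=uv$ versus $e=vu$, and this is exactly what the symmetry of $F$ in its first two and next two coordinates was built to guarantee in the definition of a regular function.
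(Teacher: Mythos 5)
Your proposal is correct and follows essentially the same route as the paper's proof: split the sum over the c-partition, refine each $E_i$ into the fibers $\widehat{E}$, apply Lemma~\ref{pomoc3} to make $F$ constant on each fiber, and collapse the remaining weight sum into $w_e^i(E)$. The paper's argument is exactly this chain of equalities, so there is nothing to add.
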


\begin{proof}
Obviously, $E(G) = \displaystyle\bigcup_{i=1}^k E_i$ and for any $i \in \lbrace 1,\ldots, k \rbrace$ it holds $$E_i= \bigcup_{E \in E(G_i)} \widehat{E}.$$
Then, by Lemma \ref{pomoc3} we get
\begin{eqnarray*}
TI_F(G_{sw}) & = & \sum_{e \in E(G_{sw})}w_{e}(e)  F(e|G_{sw})  \\[1em]
& = & \sum_{i=1}^k  \Bigg( \sum_{e \in E_{i}}w_{e}(e) F(e|G_{sw}) \Bigg) \\[1em]
  & = & \sum_{i=1}^k \Bigg( \sum_{E \in E(G_i)} \Bigg[ \sum_{e  \in \widehat{E}} w_{e}(e) F(e|G_{sw}) \Bigg] \Bigg)\\[1em]
    & = & \sum_{i=1}^k \Bigg( \sum_{E \in E(G_i)} \Bigg[ \sum_{e  \in \widehat{E}} w_{e}(e)F(E|G_i) \Bigg] \Bigg)\\[1em]
     & = & \sum_{i=1}^k \Bigg( \sum_{E \in E(G_i)} \Bigg[ \sum_{e  \in \widehat{E}} w_{e}(e) \Bigg] F(E|G_i)  \Bigg)\\[1em]
          & = & \sum_{i=1}^k \Bigg( \sum_{E \in E(G_i)} w_{e}^{i}(E) F(E|G_i)  \Bigg)\\[1em]
& = & \sum_{i=1}^k TI_F(G_i). 
\end{eqnarray*}
Therefore, the proof is complete. \qed
\end{proof}

\section{Applications to molecular graphs}

We apply our main result to some important families of molecular graphs. In particular, benzenoid systems, phenylenes and coronoid systems are considered.

\subsection{Benzenoid systems}

In this subsection, we show how Theorem \ref{glavni} can be applied to benzenoid systems. These chemical graphs represent benzenoid hydrocarbons, which are composed exclusively of six-membered rings. For more information, see \cite{gucy-89}. It is well known that benzenoid systems are partial cubes \cite{klavzar-book}.

\noindent
Let ${\cal H}$ be the infinite hexagonal (graphite) lattice and let $Z$ be a cycle on it. A {\em benzenoid system} is the graph induced by all the vertices and edges of ${\cal H}$, lying on $Z$ or in its interior. In addition, by $|Z|$ we denote the number of vertices in $Z$. For an example of a benzenoid system, see Figure \ref{ben_primer}. 

\begin{figure}[h!] 
\begin{center}
\includegraphics[scale=0.7,trim=0cm 0.4cm 0cm 0cm]{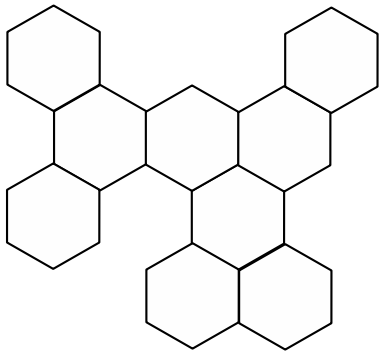}
\end{center}
\caption{\label{ben_primer} Benzenoid system $G$.}
\end{figure}

It turns out that any $\Theta$-class of a benzenoid system
has a nice geometric representation, since it coincides with exactly one of its elementary cuts (see \cite{klavzar-book}). An \textit{elementary cut} of a benzenoid system $G$ is a line segment that starts at the center of a peripheral edge of a benzenoid system,
goes orthogonal to it and ends at the first next peripheral
edge of $G$. 

The edge set of a benzenoid system $G$ can be naturally partitioned into sets $E_1, E_2$ and $E_3$ of edges of the same direction. Obviously, the partition $\lbrace E_1,E_2,E_3 \rbrace$ is a c-partition of the set $E(G)$. If $G_{sw}$ is a strength-weighted benzenoid system, then for $i \in \lbrace 1, 2, 3 \rbrace$, the strength-weighted quotient graph $G_i=G/E_i$ will be denoted as $T_i$. It is well known that $T_1$, $T_2$, and $T_3$ are trees \cite{chepoi-1996}. Such quotient trees were previously used to calculate various distance-based topological indices of benzenoid systems, for example see \cite{chepoi-1997,tratnik_edge_sz,tratnik_weighted_sz}. 

Let $G$ be a benzenoid system from Figure \ref{ben_primer}. Moreover, let $G_{sw}$ be a normally strength-weighted benzenoid system obtained from $G$ such that $w_e(e)= \deg(u) + \deg(v)$ for any $e \in E(G_{sw})$. Then, let $E_1$ be the set of all the vertical edges of $G$. Therefore, the edges from $E_1$ correspond to horizontal elementary cuts of $G_{sw}$, see Figure \ref{quotient_tree1} a). The corresponding strength-weighted quotient tree is shown in Figure \ref{quotient_tree1} b). Similarly, we obtain also the other two quotient trees, see Figure \ref{quotient_tree2}.

\begin{figure}[h!] 
\begin{center}
\includegraphics[scale=0.7,trim=0cm 0.4cm 0cm 0cm]{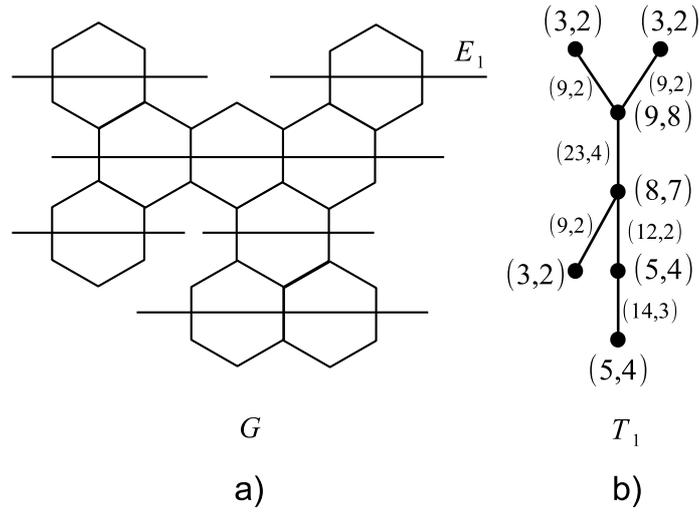}
\end{center}
\caption{\label{quotient_tree1} a) Elementary cuts in $E_1$ and b) the corresponding strength-weighted quotient tree $T_1$.}
\end{figure}

\begin{figure}[h!] 
\begin{center}
\includegraphics[scale=0.7,trim=0cm 0.4cm 0cm 0cm]{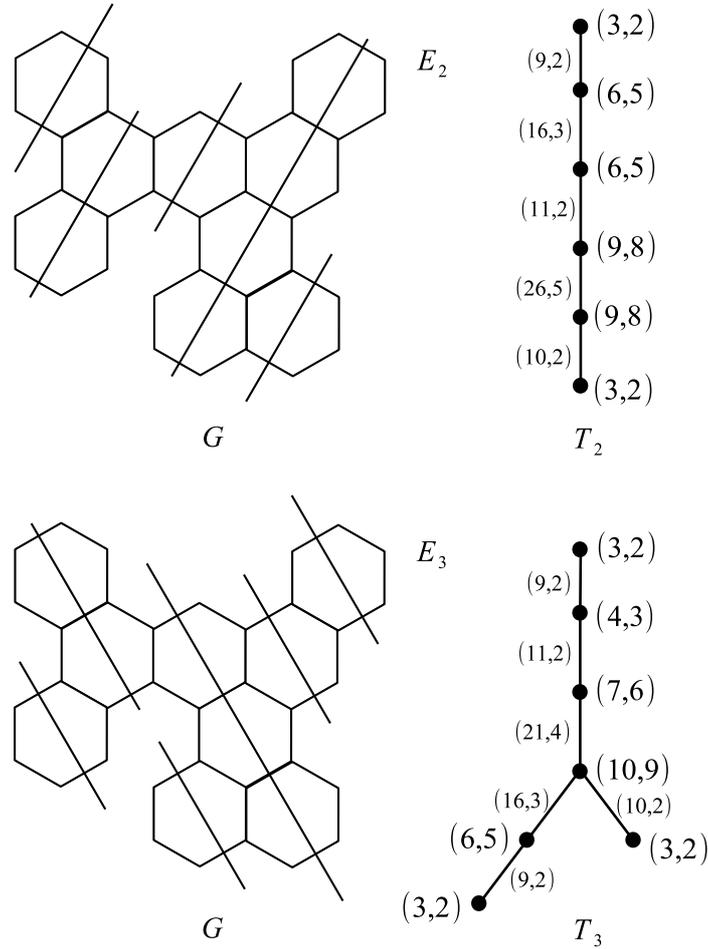}
\end{center}
\caption{\label{quotient_tree2} Elementary cuts in a given direction and the corresponding strength-weighted quotient trees.}
\end{figure}

The following proposition follows directly from Theorem \ref{glavni}. 

\begin{proposition} \label{ben} Let $G_{sw}$ be a strength-weighted benzenoid system. If $F$ is a regular function for $G_{sw}$ and $T_1, T_2, T_3$ are the corresponding strength-weighted quotient trees, then
$$TI_F(G_{sw}) = TI_F(T_1) + TI_F(T_2) + TI_F(T_3).$$
\end{proposition}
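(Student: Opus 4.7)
The plan is to view this proposition as a direct specialization of Theorem \ref{glavni} with $k = 3$. All that really needs to be checked is that the partition $\{E_1, E_2, E_3\}$ of the edge set by direction is genuinely a c-partition of $E(G_{sw})$; once that is in hand, the conclusion is just a rewriting of Theorem \ref{glavni} together with the notational convention $G_i = T_i$.

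First I would justify that $\{E_1, E_2, E_3\}$ is a c-partition. The text preceding the statement records the well-known geometric fact that every $\Theta$-class of a benzenoid system coincides with exactly one elementary cut. Because an elementary cut is a straight segment orthogonal to one of the three edge directions of the hexagonal lattice, its edges are mutually parallel and hence all lie in a single $E_i$. Combined with the fact that benzenoid systems are partial cubes, so that $\Theta = \Theta^*$, this means that each $\Theta^*$-class is contained entirely in some $E_i$. Equivalently, each $E_i$ is a (disjoint) union of $\Theta^*$-classes, which is exactly the definition of a c-partition.

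Having established the c-partition property, I would invoke Theorem \ref{glavni} with $k = 3$ to obtain
$$TI_F(G_{sw}) = \sum_{i=1}^{3} TI_F(G_i),$$
and then rename $G_i = T_i$ (using the fact, recalled in the subsection, that the quotient of a benzenoid system by its directional edge set is a tree) to produce the stated formula.

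I do not expect any real obstacle here: the regularity of $F$ and the applicability of the main theorem carry the proof. The only non-trivial ingredient is the identification of $\Theta$-classes with elementary cuts, which is standard for benzenoid systems and is essentially quoted from the surrounding discussion. The proof should therefore be very short, amounting to one or two sentences that cite Theorem \ref{glavni} and the preceding c-partition remark.
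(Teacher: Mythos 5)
Your proposal is correct and matches the paper's approach exactly: the paper notes in the preceding discussion that the directional partition $\{E_1,E_2,E_3\}$ is a c-partition (via the identification of $\Theta$-classes with elementary cuts) and then states that the proposition follows directly from Theorem \ref{glavni}. Your added justification of the c-partition property is a sound elaboration of what the paper treats as obvious.
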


To finish the previous example, we compute the weighted-plus revised edge-Szeged index and the square vertex-PI index of graph $G$ from Figure \ref{ben_primer} by using Proposition \ref{ben}. Firstly, we calculate the corresponding indices of strength-weighted quotient trees shown in Figure \ref{quotient_tree1} and Figure \ref{quotient_tree2}. For the weighted-plus revised edge-Szeged index we obtain
\begin{eqnarray*}
w^+Sz_e^*(T_1) &=& 21657.5, \\
w^+Sz_e^*(T_2) &=& 24234.5, \\
w^+Sz_e^*(T_3) &=& 21879, \\
w^+Sz_e^*(G) &=& w^+Sz_e^*(T_1) + w^+Sz_e^*(T_2) + w^+Sz_e^*(T_3)= 67771.
\end{eqnarray*}

Note that for the square vertex-PI index, we should have $w_e(e)=1$ for any $e \in E(G_{sw})$. Therefore, $w_e(e)=s_e(e)$ for $e \in E(G_{sw})$ and $w_e^i=s_e^i$ for $i \in \lbrace 1,2,3 \rbrace$. As a consequence, we get
\begin{eqnarray*}
PI_v^s(T_1) &=& 13762, \\
PI_v^s(T_2) &=& 11754, \\
PI_v^s(T_3) &=& 13518, \\
PI_v^s(G) &=& PI_v^s(T_1) +PI_v^s(T_2) + PI_v^s(T_3)= 39034.
\end{eqnarray*}

It can be shown that by Proposition \ref{ben}, a general Szeged-like topological index of a benzenoid system can we computed in linear time. Analogous results are already known for some topological indices \cite{chepoi-1997,tratnik_edge_sz}.

\begin{proposition}
Let $G_{sw}$ be a strength-weighted benzenoid system with $n$ vertices. If $F$ is a normal function for $G_{sw}$, then the index $TI_F(G_{sw})$ can be computed in $O(n)$ time.
\end{proposition}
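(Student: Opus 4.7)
The plan is to reduce the computation to three tree computations via Proposition~\ref{ben} and then invoke Lemma~\ref{trees_linear1}. Concretely, I would show that (a) the three strength-weighted quotient trees $T_1,T_2,T_3$ can be constructed from $G_{sw}$ in $O(n)$ time, (b) each tree has $O(n)$ vertices, and (c) each $TI_F(T_i)$ is evaluated in $O(n)$ time; summing three $O(n)$ quantities is then $O(n)$.

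First I would recall the standard structural bounds. A benzenoid system with $n$ vertices is a planar graph whose inner faces are hexagons, so by Euler's formula $|E(G)|\in O(n)$ and the number of hexagons is $O(n)$; consequently the number of elementary cuts in each of the three fixed directions, which equals $|E(T_i)|$, is $O(n)$, and so $|V(T_i)|\le |E(T_i)|+1 \in O(n)$. Next I would describe a concrete linear-time construction. Embed $G$ in the hexagonal lattice; this embedding is computable in $O(n)$ time by a standard planar/BFS sweep, after which every edge is assigned one of three directions, yielding the c-partition $\{E_1,E_2,E_3\}$. For each $i$, the graph $G\setminus E_i$ is a disjoint union of paths (the ``horizontal'' rows orthogonal to direction $i$), so its connected components and the adjacencies between them in $G$ can be found in $O(|V(G)|+|E(G)|)=O(n)$ time by BFS/DFS, giving the underlying unweighted tree $T_i$.

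Then I would compute the strength-weighted labels of $T_i$ within the same linear budget. For each component $X$ of $G\setminus E_i$, the values $w_v^i(X)=\sum_{x\in V(X)}w_v(x)$ and $s_v^i(X)=\sum_{x\in V(X)}s_v(x)+\sum_{f\in E(X)}s_e(f)$ are single passes over the vertices and internal edges of $X$. For each edge $E=XY$ of $T_i$, the labels $w_e^i(E)$ and $s_e^i(E)$ aggregate over the edges in $\widehat{E}\subseteq E_i$, each of which is processed exactly once. Since each vertex and each edge of $G_{sw}$ is visited $O(1)$ times per direction (three directions total), the total construction cost of the three labelled trees is $O(n)$.

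Finally, since $F$ is a normal function for $G_{sw}$, it is also a normal function for each $T_i$ (the quantities $n_U,n_V,m_U,m_V,n_0,m_0$ used to evaluate $F(E|T_i)$ depend only on the strength-weighted tree $T_i$). Applying Lemma~\ref{trees_linear1} to each tree yields $TI_F(T_i)$ in $O(|V(T_i)|)=O(n)$ time, and Proposition~\ref{ben} gives
\[
TI_F(G_{sw})=TI_F(T_1)+TI_F(T_2)+TI_F(T_3),
\]
so the overall running time is $O(n)$. The only nontrivial point is verifying that the four quotient weights can indeed be accumulated in a single linear sweep, which follows from the fact that the components of $G\setminus E_i$ partition $V(G)$ and their inter-component edges are precisely the elements of $E_i$; I expect this bookkeeping, rather than any genuine mathematical difficulty, to be the main thing to spell out carefully.
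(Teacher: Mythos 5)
Your proposal is correct and follows essentially the same route as the paper: construct the three strength-weighted quotient trees and their weights in linear time (the paper delegates the tree construction to a citation of Chepoi's work, whereas you spell out the BFS/DFS and bookkeeping explicitly), then evaluate each $TI_F(T_i)$ via Lemma~\ref{trees_linear1} and sum using Proposition~\ref{ben}. The extra detail you supply is a faithful expansion of what the paper leaves implicit, not a different argument.
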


\begin{proof}
It is already known that quotient trees $T_i$, $i \in \lbrace 1,2,3 \rbrace$, can be computed in linear time, see \cite{chepoi-1996}. The calculation of the corresponding  weights is also straightforward. By Lemma \ref{trees_linear1}, the index $TI_F(T_i)$ can be computed in linear time for any $i \in \lbrace 1,2,3 \rbrace$. The result now follows by Proposition \ref{ben}. \qed
\end{proof}

However, for normally strength-weighted benzenoid systems the calculation can be done even faster, in sublinear time. To show this, we follow the idea of paper \cite{CK-1998}, where analogous result was proved for the Wiener index. The next lemma will be  needed.

\begin{lemma}  \label{trees_linear}
If $G_{sw}$ is a normally strength-weighted benzenoid system and $Z$ its boundary cycle, then each strength-weighted quotient tree $T_i$, $i \in \lbrace 1,2,3 \rbrace$, can be obtained in $O(|Z|)$ time.	
\end{lemma}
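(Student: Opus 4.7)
My plan is to adapt the algorithm of \cite{CK-1998} for computing the Wiener index of a benzenoid system in sublinear time, enriching it so that it also produces the four strength-weight functions $w_v^i, s_v^i, w_e^i, s_e^i$ that make $T_i$ into a strength-weighted tree. Since $G_{sw}$ is normally strength-weighted, the target weights simplify to vertex counts, edge counts, or (for $w_e^+$ and $w_e^*$) sums of local degree contributions, all of which can in principle be recovered from information visible on $Z$.

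The first step is a single traversal of $Z$ in $O(|Z|)$ time, during which each peripheral edge is labelled by its direction $i\in\{1,2,3\}$ and each boundary vertex is assigned planar coordinates in the hexagonal lattice ${\cal H}$. Using a stack-based sweep as in \cite{CK-1998}, I would then pair up the peripheral edges of each direction $i$ so that each pair consists of the two boundary edges at which a single elementary cut of $E_i$ begins and ends; the total work across all three directions remains $O(|Z|)$ since each peripheral edge is handled a constant number of times.

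With this pairing the tree $T_i$ is essentially already present: its edges correspond bijectively to the elementary cuts in $E_i$, and its vertices correspond to the strips of $G\setminus E_i$, which are read off as the sub-benzenoids delimited by consecutive paired cut-endpoints on $Z$. The length $|\widehat E|$ of each cut equals the lattice distance between its two peripheral endpoints and hence can be read from the stored coordinates in constant time, which yields $s_e^i(E)$ and, in the two non-trivial cases of $w_e^+$ and $w_e^*$, also $w_e^i(E)$ (interior vertices of a benzenoid have degree $3$, so only the two cut-edges incident to $Z$ contribute an $O(1)$ correction per cut). Combining the arc-lengths of $Z$ bordering a given strip $X$ with the lengths of its two bounding cuts, the per-strip quantities $w_v^i(X)=|V(X)|$ and $s_v^i(X)=|E(X)|$ can be written as a closed-form lattice expression and therefore computed in time proportional to the boundary data describing $X$. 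Summing over the three directions gives the advertised $O(|Z|)$ total.

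The main obstacle will be verifying rigorously that the internal counts $|V(X)|$ and $|E(X)|$ are indeed determined by the boundary of the strip $X$, since $X$ is never scanned vertex-by-vertex. Here the rigidity of the hexagonal lattice is essential: a strip sandwiched between two parallel elementary cuts and a prescribed collection of boundary arcs of $Z$ is uniquely realised as a sub-lattice of ${\cal H}$, and the kind of lattice identities used in \cite{CK-1998} for the Wiener index extend, with elementary bookkeeping, to the degree-weighted cases. Once this geometric reduction is in place, summing the per-arc and per-cut work delivers the stated running time.
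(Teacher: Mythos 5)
Your overall route is the same as the paper's: adapt the sublinear Wiener-index construction of \cite{CK-1998} so that, besides the tree structure of $T_i$, it also outputs the four weights $w_v^i,s_v^i,w_e^i,s_e^i$, all of which reduce to vertex counts, edge counts and $O(1)$ degree corrections near $Z$ because $G_{sw}$ is normally strength-weighted. That part of your bookkeeping (cut lengths from lattice coordinates, interior degrees equal to $3$, counts of $|V(X)|$ and $|E(X)|$ recoverable from boundary data via Euler-type lattice identities) is in the right spirit and matches what the paper intends.

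The gap is in the one step that carries the real content of the lemma: pairing, for each direction $i$, the two peripheral edges that bound a common elementary cut, and hence recovering the components of $G\setminus E_i$ without scanning the interior. You attribute this to ``a stack-based sweep as in \cite{CK-1998}'', but that is not what \cite{CK-1998} does: the pairing is a horizontal-visibility problem for the boundary polygon, and both \cite{CK-1998} and the proof the paper points to (Lemma 4.3 of \cite{tratnik_weighted_sz}) obtain it from Chazelle's linear-time algorithm \cite{chazelle} for computing all vertex-edge visible pairs (equivalently, the trapezoidal decomposition) of a simple polygon. A single stack pass along $Z$ does not by itself determine this matching --- a non-crossing perfect matching on cyclically ordered boundary edges is not unique, and which peripheral edge is ``the first next'' one hit by a cut depends on the geometry of bays and fjords, not just on the cyclic order. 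Either invoke Chazelle's result explicitly, or supply a genuinely lattice-specific argument (e.g.\ bucketing the peripheral edges of direction $i$ by their common cut line and pairing consecutive ones along that line, with a justification that this takes $O(|Z|)$ total time). A second, smaller slip: you speak of the ``two bounding cuts'' of a strip $X$, but a component of $G\setminus E_i$ can be incident to arbitrarily many cuts --- the quotient trees in Figures \ref{quotient_tree1} and \ref{quotient_tree2} already have vertices of degree $3$ --- so the closed-form expression for $|V(X)|$ and $|E(X)|$ must be stated in terms of the full boundary of $X$ (its arcs of $Z$ together with all incident cuts), and the $O(|Z|)$ total then follows because each cut borders exactly two components and there are at most $|Z|/2$ cuts per direction.
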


\begin{proof}
The proof uses a special construction of strength-weighted quotient trees and it is similar to the proof of Lemma 4.3 in \cite{tratnik_weighted_sz}. It relies on Chazelle's algorithm \cite{chazelle} for computing all vertex-edge visible pairs of edges of a simple (finite) polygon in linear time.  Hence, the details are omitted. \qed
\end{proof}

\noindent
We can now state the final result of this section.

\begin{theorem}
Let $G_{sw}$ be a normally strength-weighted benzenoid system and $Z$ its boundary cycle. If $F$ is a normal function for $G_{sw}$, then the index $TI_F(G_{sw})$ can be computed in $O(|Z|)$ time.
\end{theorem}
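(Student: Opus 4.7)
The plan is to combine three ingredients already available in the paper: Proposition \ref{ben} to reduce the problem to three strength-weighted quotient trees, Lemma \ref{trees_linear} to construct those trees cheaply, and Lemma \ref{trees_linear1} to evaluate $TI_F$ on each tree in time linear in its size. The only delicate point is to verify that ``linear in the size of the tree'' is actually $O(|Z|)$, not $O(n)$ in the size of $G_{sw}$.

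First I would observe that for each $i\in\{1,2,3\}$ the vertex set of $T_i$ consists of the connected components of $G\setminus E_i$. Since $G_{sw}$ is a benzenoid system, each $\Theta$-class coincides with an elementary cut, and every elementary cut has its two endpoints on the boundary cycle $Z$. Consequently the number of elementary cuts of $G_{sw}$ is at most $|Z|/2$, and hence the number of $\Theta$-classes contained in $E_i$, the number of edges of $T_i$, and (since $T_i$ is a tree) the number of vertices of $T_i$ are all $O(|Z|)$. In particular the total combinatorial size of $T_1,T_2,T_3$ is $O(|Z|)$.

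Next, by Lemma \ref{trees_linear} each strength-weighted quotient tree $T_i$ can be constructed in $O(|Z|)$ time; this includes computing the weights $w_v^i$, $s_v^i$, $w_e^i$, $s_e^i$. Since $F$ is a normal function for $G_{sw}$, it is also a normal function for each $T_i$ (the defining identities of Lemma \ref{pomoc}, Lemma \ref{pomoc1}, Lemma \ref{pomoc3} show that the six arguments of $F$ are preserved under the passage to the quotient, and the $O(1)$ evaluation assumption depends only on $F$, not on the host graph). Therefore Lemma \ref{trees_linear1} applies to each $T_i$ and yields $TI_F(T_i)$ in time $O(|V(T_i)|)=O(|Z|)$.

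Finally I would invoke Proposition \ref{ben} to write
\[
TI_F(G_{sw})=TI_F(T_1)+TI_F(T_2)+TI_F(T_3),
\]
so that summing three quantities already computed contributes only $O(1)$ extra work. The total running time is therefore $O(|Z|)$. The main potential obstacle is the bookkeeping step of checking that a \emph{normal} function for $G_{sw}$ remains normal when the host graph is replaced by $T_i$; this is where one must lean on Lemma \ref{pomoc3} to justify that no extra per-edge cost is introduced when the quantities $n_u, n_v, m_u, m_v, n_0, m_0$ are re-interpreted inside the quotient tree. Everything else is a straightforward accounting of sizes.
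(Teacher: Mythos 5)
Your proposal is correct and follows the same route as the paper's own proof: construct the three strength-weighted quotient trees in $O(|Z|)$ time via Lemma \ref{trees_linear}, evaluate $TI_F$ on each by Lemma \ref{trees_linear1}, and sum via Proposition \ref{ben}. The only difference is that you make explicit the (implicitly used) size bound $|V(T_i)|=O(|Z|)$ coming from the correspondence between $\Theta$-classes and elementary cuts, which is a welcome but not essentially new addition.
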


\begin{proof}
By Lemma \ref{trees_linear} it follows that the strength-weighted trees $T_i$, $i \in \lbrace 1,2,3 \rbrace$, can be computed in $O(|Z|)$ time. Furthermore, by Lemma \ref{trees_linear1} the index $TI_F(T_i)$ of each tree can be computed in linear time with respect to $|Z|$. Finally,  $TI_F(G_{sw})$ can be computed in $O(|Z|)$ time by Proposition \ref{ben}.  \qed
\end{proof}

\subsection{Phenylenes}

Phenylenes are polycyclic conjugated molecules composed of hexagonal and quadrilateral cycles. Molecular descriptors for these molecules were investigated in many papers, see \cite{tratnik_weighted_sz,zigert-2018} as an example. In this subsection, we describe an efficient method for calculating Szeged-like topological indices of phenylenes. We also show that our method can be used to easily obtain closed-form formulas for such indices.

Next, we formally define a phenylene in the language of graph theory. A benzenoid system is said to be \textit{catacondensed} if all its vertices belong to the outer face. Moreover, two distinct hexagons of a benzenoid system are called \textit{adjacent} if they have exactly one edge in common. Let $G'$ be a catacondensed benzenoid system. If we add squares between all pairs of adjacent hexagons of $G'$, the obtained graph $G$ is called a \textit{phenylene}. We then say that $G'$ is the \textit{hexagonal squeeze} of $G$.

In the following, we define four quotient trees of a phenylene  \cite{zigert-2018}. Let $G$ be a phenylene and $G'$ the hexagonal squeeze of $G$. The edge set of $G'$ can be naturally partitioned into sets $E_1'$, $E_2'$ and $E_3'$ of edges of the same direction. Denote the sets of edges of $G$ corresponding to the edges in $E_1'$, $E_2'$ and $E_3'$ by $E_1, E_2$ and $E_3$, respectively. Moreover, let $E_4 = E(G) \setminus (E_1 \cup E_2 \cup E_3)$ be the set of all the edges of $G$ that do not belong to $G'$. Again we can easily see that phenylenes are partial cubes and that the partition $\lbrace E_1,E_2,E_3,E_4 \rbrace$ is a c-partition of the edge set $E(G)$. For $i \in \lbrace 1, 2, 3, 4 \rbrace$, set $T_i = G /E_i$. As in the previous section, we can see that $T_1$, $T_2$, $T_3$, and $T_4$ are trees. In a similar way we can define the quotient trees $T_1', T_2', T_3'$ of the hexagonal squeeze $G'$. Then the tree $T_i'$ is isomorphic to $T_i$ for $i=1,2,3$ and $T_4$ is isomorphic to the inner dual of $G'$ (see \cite{zigert-2018} for the definition of the inner dual). Finally, if $G_{sw}$ is a strength-weighted phenylene, then the corresponding strength-weighted trees will be also denoted by $T_i$ for $i \in \lbrace 1,2,3,4 \rbrace$.

The following proposition follows by Theorem \ref{glavni}.

\begin{proposition} \label{izr_phe} Let $G_{sw}$ be a strength-weighted phenylene. If $F$ is a regular function for $G_{sw}$ and $T_1, T_2, T_3, T_4$ are the corresponding strength-weighted quotient trees, then
$$TI_F(G_{sw}) = TI_F(T_1) + TI_F(T_2) + TI_F(T_3) + TI_F(T_4).$$
\end{proposition}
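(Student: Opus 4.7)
The plan is to reduce the statement to a direct application of Theorem \ref{glavni}. Since $F$ is already assumed to be a regular function for $G_{sw}$, the only real work is to verify that the edge partition $\lbrace E_1, E_2, E_3, E_4\rbrace$ described just before the proposition is indeed a c-partition of $E(G_{sw})$, because Theorem \ref{glavni} then gives $TI_F(G_{sw})=\sum_{i=1}^{4} TI_F(G_i)$, where $G_i=G_{sw}/E_i$. By the definition $T_i=G_{sw}/E_i$ adopted in the subsection, this is exactly the claimed formula.

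To verify the c-partition property, I would first recall that phenylenes are partial cubes, so $\Theta=\Theta^{*}$ and each $\Theta^{*}$-class is entirely determined by its set of edges. Next I would argue, as is standard for phenylenes, that each set $E_i$ is a union of $\Theta^{*}$-classes: the three direction classes $E_1, E_2, E_3$ arise from the hexagonal squeeze $G'$, where each elementary cut of $G'$ is a single $\Theta^{*}$-class consisting of parallel edges, and attaching squares between adjacent hexagons only adds pairs of parallel edges to these cuts without merging different directions. The remaining class $E_4$ consists exactly of the edges introduced by the attached squares, and a short check (using that two diametrically opposite edges of the square lie in the same $\Theta$-class) shows that $E_4$ is likewise a union of $\Theta^{*}$-classes. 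Since $E_1\cup E_2 \cup E_3 \cup E_4 = E(G_{sw})$ and the $E_i$'s are pairwise disjoint, $\lbrace E_1, E_2, E_3, E_4\rbrace$ is a partition of $E(G_{sw})$ coarser than the $\Theta^{*}$-partition, hence a c-partition.

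With the c-partition property established, Theorem \ref{glavni} applies verbatim and yields
\[
TI_F(G_{sw}) \;=\; \sum_{i=1}^{4} TI_F(G_{sw}/E_i) \;=\; \sum_{i=1}^{4} TI_F(T_i),
\]
which is exactly the claimed identity. The main (and essentially the only) obstacle is the c-partition verification above; however, this is a structural fact about phenylenes that is well documented in the literature cited in the subsection (for instance in \cite{zigert-2018}), so in practice I would simply invoke those references rather than reprove it in detail, and present the result as an immediate corollary of Theorem \ref{glavni}.
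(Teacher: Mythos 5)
Your proposal is correct and follows exactly the paper's route: the paper also observes that $\lbrace E_1,E_2,E_3,E_4\rbrace$ is a c-partition of $E(G)$ (stating this in the paragraph preceding the proposition) and then derives the identity as an immediate consequence of Theorem \ref{glavni}. Your additional detail on why each $E_i$ is a union of $\Theta^{*}$-classes is more explicit than the paper's "we can easily see" but does not change the argument.
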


As an example, we consider an infinite family of phenylenes which will be denoted by $Ph_n$, $n \geq 1$ (see Figure \ref{phenylenes}). In the following, we deduce closed-form formulas for the weighted-plus revised edge-Szeged index and the square vertex-PI index of $Ph_n$. For this purpose, we assume that the phenylenes are normally strength-weighted such that $w_e = w_e^+$. The corresponding strength-weighted quotient trees are depicted in Figure \ref{phenylenes_quotient}. However, as in the previous subsection, for any $i \in \lbrace 1,2,3,4 \rbrace$ we use the weights $s_e^i$ of $T_i$ instead of $w_e^i$ in the computation of the square vertex-PI index.

Firstly, the weighted-plus revised edge-Szeged index of the quotient trees is calculated:
\begin{figure}[h]
\begin{center}
\includegraphics[scale=0.8,trim=0cm 0.4cm 0cm 0cm]{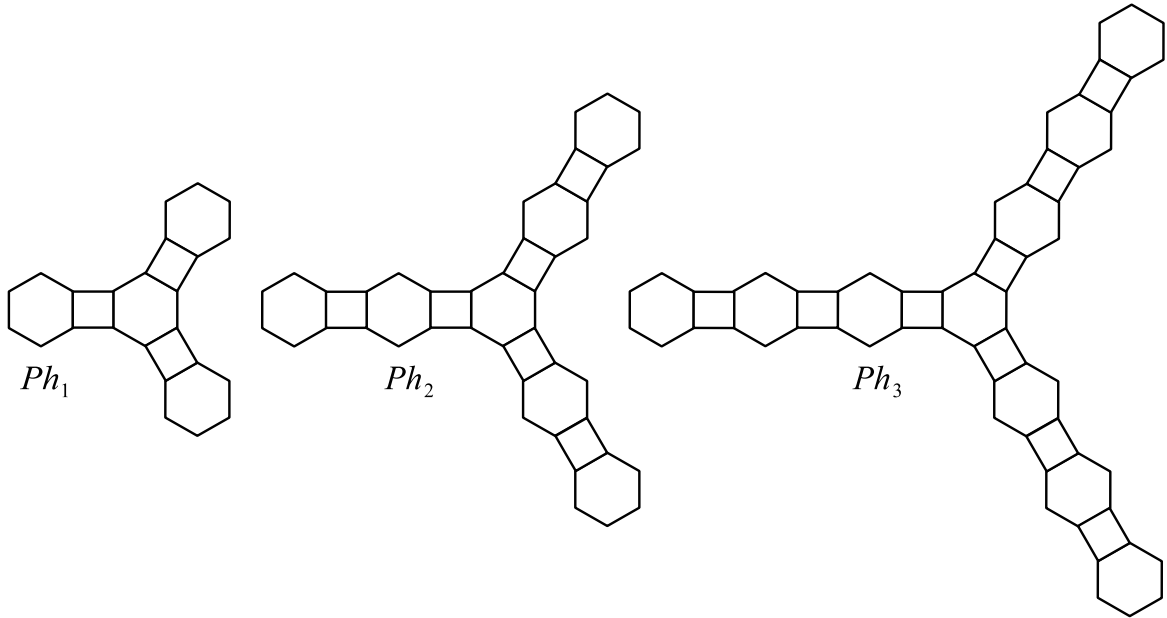}
\end{center}
\caption{\label{phenylenes} The first three representatives of a family of phenylenes $Ph_n$.}
\end{figure}

\begin{figure}[h] 
\begin{center}
\includegraphics[scale=0.7,trim=0cm 0.4cm 0cm 0cm]{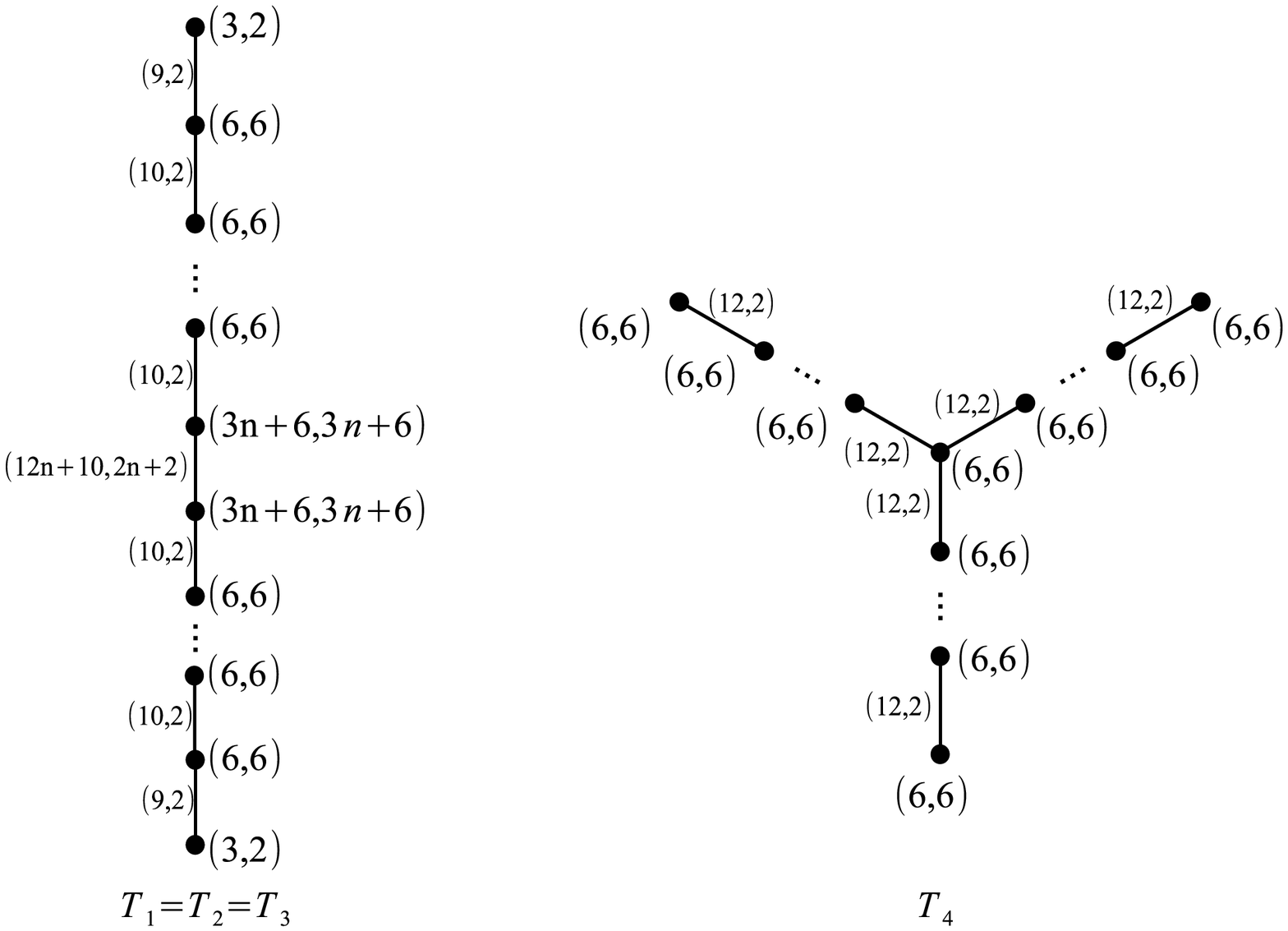}
\end{center}
\caption{\label{phenylenes_quotient} Quotient trees $T_1=T_2=T_3$ and $T_4$ of phenylene $Ph_n$.}
\end{figure}

\begin{eqnarray*}
w^+Sz_e^*(T_1) & = & 2 \cdot 9 \cdot \left(0 + 2 + \frac{2}{2} \right) \left( 6n + (18n +2) + \frac{2}{2} \right) \\
& + & 2 \cdot \sum_{i=2}^n  \Bigg( 10 \left((2i-2) + (6i-4) + \frac{2}{2} \right) \\
& \cdot &  \left( (6n-2i+2) + (18n - 6i + 8) + \frac{2}{2} \right) \Bigg) \\
& + &  (12n+10)\left(2n + (9n+2) + \frac{2n+2}{2} \right) \left( 2n +  (9n+2) + \frac{2n+2}{2} \right) \\
& = & \frac{1}{3} \left( 9664n^3 + 7392n^2 + 1952n + 216 \right),
\end{eqnarray*}
\begin{eqnarray*}
w^+Sz_e^*(T_4) & = & 3 \cdot \sum_{i=1}^n 12 \left((2i-2) + 6i + \frac{2}{2} \right) \left((6n-2i) + (18n-6i+6) + \frac{2}{2} \right) \\
& = & 2688n^3 + 2592n^2 + 516n.
\end{eqnarray*}

\noindent
Therefore, by Proposition \ref{izr_phe} we get
\begin{eqnarray*}
w^+Sz_e^*(Ph_n) & = & 3 \cdot w^+Sz_e^*(T_1) + w^+Sz_e^*(T_4)  \\
& = & 12352n^3 + 9984n^2 + 2468n + 216.
\end{eqnarray*}

\noindent
Next, one can compute the square vertex-PI index of strength-weighted quotient trees:
\begin{eqnarray*}
PI_v^s(T_1) & = &  2 \cdot \sum_{i=1}^n 2 \left(  \left( 6i-3 \right)^2  +   \left( 18n -6i + 9 \right)^2 \right) \\
& + &  (2n+2) \left(  \left( 9n+3 \right)^2  +   \left( 9n+3 \right)^2 \right) \\
& = & 1284n^3 + 1260n^2 + 372n + 36,
\end{eqnarray*}
\begin{eqnarray*}
PI_v^s(T_4) & = &  3 \cdot \sum_{i=1}^n 2 \left(  \left( 6i \right)^2  +   \left( 18n - 6i + 6 \right)^2 \right) \\
& = & 1440n^3 + 648n^2 + 72n.
\end{eqnarray*}

\noindent
Again, by Proposition \ref{izr_phe} we obtain
\begin{eqnarray*}
PI_v^s(Ph_n) & = & 3 \cdot PI_v^s(T_1) + PI_v^s(T_4)  \\
& = & 5292n^3 + 4428n^2 + 1188n + 108.
\end{eqnarray*}

Similarly as in the previous section, by Proposition \ref{izr_phe} we obtain the following computational result.
\begin{proposition}
Let $G_{sw}$ be a strength-weighted phenylene with $n$ vertices. If $F$ is a normal function for $G_{sw}$, then the index $TI_F(G_{sw})$ can be computed in $O(n)$ time.
\end{proposition}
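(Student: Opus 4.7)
The plan is to mimic the linear-time argument already given for benzenoid systems, with the only extra ingredient being the handling of the fourth edge class $E_4$ consisting of the quadrilateral edges. By Proposition \ref{izr_phe} we have
\[
TI_F(G_{sw}) = TI_F(T_1) + TI_F(T_2) + TI_F(T_3) + TI_F(T_4),
\]
so it suffices to show that each of the four strength-weighted quotient trees can be constructed in $O(n)$ time and that the index $TI_F(T_i)$ can then also be evaluated in $O(n)$ time. The second part is immediate from Lemma \ref{trees_linear1}, since each $T_i$ has at most $n$ vertices and $F$ is assumed to be normal. Hence the real work is in showing that the trees $T_1,T_2,T_3,T_4$, together with the weights $w_v^i,s_v^i,w_e^i,s_e^i$, can be produced in linear time from $G_{sw}$.

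First I would build the hexagonal squeeze $G'$ of $G_{sw}$ by contracting the quadrilateral edges in $E_4$; this is an $O(n)$ operation because the partition $\{E_1,E_2,E_3,E_4\}$ can be read off directly from the three edge-directions inherited from the hexagonal lattice together with the quadrilateral edges. For $i\in\{1,2,3\}$ the quotient tree $T_i = G_{sw}/E_i$ coincides with the quotient tree $T_i'$ of the catacondensed benzenoid system $G'$ (up to the contribution of the quadrilateral edges to the weights), and therefore can be obtained in $O(n)$ time by the standard construction for benzenoid systems used in Lemma \ref{trees_linear} (which itself relies on the known $O(n)$ algorithms for benzenoid quotient trees, e.g.\ \cite{chepoi-1996}). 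The tree $T_4$ is isomorphic to the inner dual of $G'$ and can again be constructed in $O(n)$ time by a single traversal of the hexagons of $G'$.

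Next I would compute the four weight functions on each $T_i$ in linear time. This is a matter of one pass over the vertices and edges of $G_{sw}$: for every vertex $x$ and every edge $f$ of $G_{sw}$ we know to which connected component of $G_{sw}\setminus E_i$ it belongs (that information is produced together with the tree $T_i$), so we can accumulate the sums defining $w_v^i(X)$ and $s_v^i(X)$ in $O(n)$ total time; the edge weights $w_e^i(E)$ and $s_e^i(E)$ are obtained by summing $w_e$ and $s_e$ over the edges of each $\widehat{E}$, which is again an $O(n)$ pass over $E(G_{sw})$.

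Having the four strength-weighted trees in hand, we apply Lemma \ref{trees_linear1} to each of them and sum the four values, as prescribed by Proposition \ref{izr_phe}. Each step is linear in the size of $G_{sw}$, so the total running time is $O(n)$. The part I expect to be slightly delicate is justifying rigorously that the construction of $T_1,T_2,T_3$ from $G_{sw}$ (and not from its hexagonal squeeze, whose vertices already have different degrees than those of $G_{sw}$) correctly produces the required weight functions; this is handled by observing that the vertex partitions of $G_{sw}\setminus E_i$ and $G'\setminus E_i'$ correspond bijectively for $i\in\{1,2,3\}$, so only the local summation of weights changes, and this summation is still linear in $n$.
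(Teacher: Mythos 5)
Your proposal is correct and follows essentially the same route as the paper, which proves this proposition only implicitly ("Similarly as in the previous section...") by the same recipe used for benzenoid systems: construct the four strength-weighted quotient trees and their weights in linear time, evaluate each $TI_F(T_i)$ via Lemma \ref{trees_linear1}, and sum using Proposition \ref{izr_phe}. Your write-up merely spells out the details (the hexagonal squeeze, the inner dual for $T_4$, and the linear-time weight accumulation) that the paper leaves to the reader.
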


\subsection{Coronoid systems}

Coronoid hydrocarbons are, like benzenoid hydrocarbons, polycyclic molecules composed of hexagonal rings \cite{gucy-89}. Their mathematical models, known as coronoid systems, are often regarded as benzenoid systems that are allowed to have holes. Formally, we take two
cycles, $C$ and $C'$, in the hexagonal lattice where $C'$ is completely embraced by $C$ and the size of $C'$ is greater than 6. A \textit{coronoid system} consists of the vertices and edges on $C$ and $C'$, and also of the vertices and edges that lie outside $C'$ but in the interior of $C$. The vertices and edges on $C'$ and its interior are sometimes referred to as the \textit{corona hole}. For more information see Chapter 8 in \cite{gucy-89}.

In this subsection, we consider a family of coronoid systems with a fixed corona hole. In particular, $Co_n$, $n \geq 1$, denotes the coronoid system with $n$ layers of hexagons around the hole, see Figure \ref{coronoid}. Again, our goal is to deduce closed-form formulas for the weighted-plus revised edge-Szeged index and the square vertex-PI index for this family of molecular graphs.

It is easy to calculate $|V(Co_n)| = 6n^2 + 24 n + 18$ and $|E(Co_n)| = 9n^2 + 33 n + 18$. The three main representatives of $\Theta^*$-classes are also shown in Figure \ref{coronoid}. These $\Theta^*$-classes will be denoted as $E_{1r}$, $r \in \lbrace 1, \ldots, n \rbrace$, $E_2$ and $E_3$. Since $E_2$ is a $\Theta^*$-class but not a $\Theta$-class, we can see that relation $\Theta$ is not transitive and therefore, $Co_n$ is not a partial cube, which makes our example more complex.
\begin{figure}[h]
\begin{center}
\includegraphics[scale=0.7,trim=0cm 0.4cm 0cm 0cm]{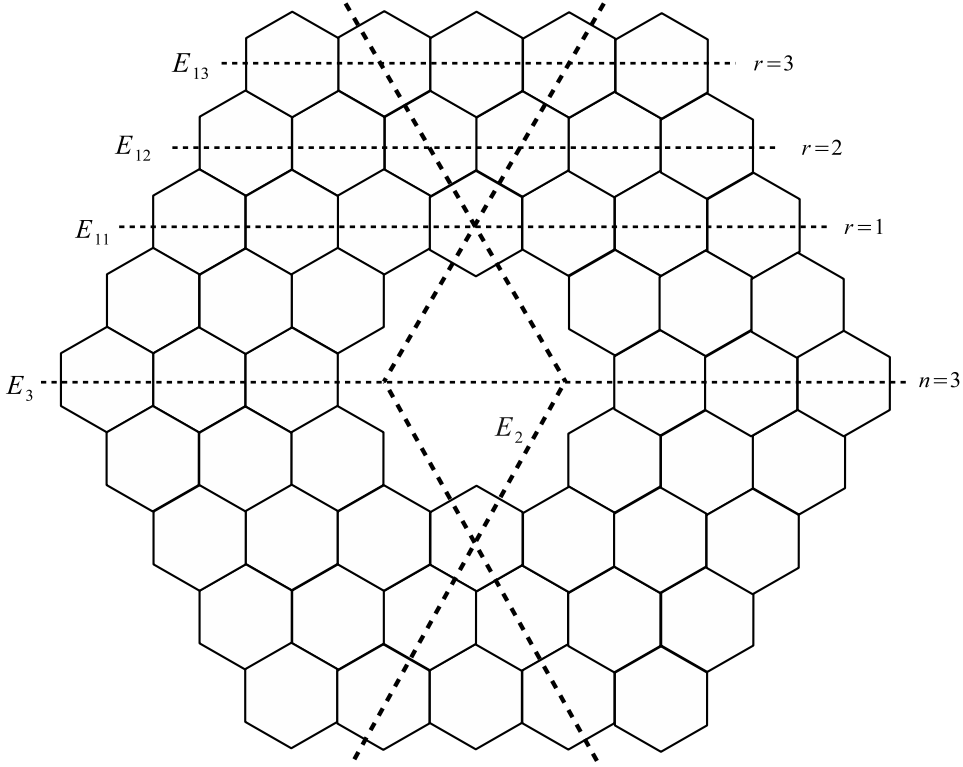}
\end{center}
\caption{\label{coronoid} Coronoid system $Co_3$ together with the representatives of $\Theta^*$-classes.}
\end{figure}

\noindent
The corresponding quotient graphs $G_{1r}=Co_n / E_{1r}$, $G_2 = Co_n/ E_2$, and $G_3 = Co_n / E_3$ are depicted in Figure \ref{coronoid_quotient}. The same notation will be used for the strength-weighted quotient graphs. Again, we suppose that graph $Co_n$ is normally strength weighted with $w_e=w_e^+$. Therefore, as in the previous subsections, we use the weight $s_e$ instead of $w_e$ when considering the square vertex-PI index.

\begin{figure}[h!]
\begin{center}
\includegraphics[scale=0.8,trim=0cm 0.4cm 0cm 0cm]{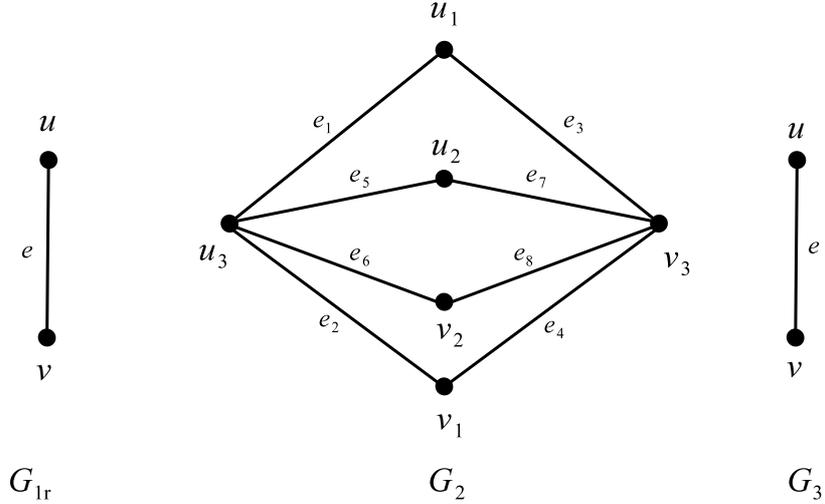}
\end{center}
\caption{\label{coronoid_quotient} Quotient graphs $G_{1r}$, $G_2$, and $G_3$ for coronoid system $Co_n$.}
\end{figure}

Firstly, we calculate the indices for the quotient graph $G_{1r}$, $r \in \lbrace 1, \ldots, n \rbrace$. The weights of $G_{1r}$ are:
\begin{eqnarray*}
w_v^1 (u) & = & 3n^2 - 4nr + 8n + r^2 - 6r + 5,  \\
w_v^1(v) &= & 3n^2 + 4nr - r^2 + 16n + 6r + 13, \\
s_v^1 (u) & = & \frac{1}{2} \left(9n^2 - 12nr + 3r^2 + 19n - 15r + 8 \right), \\
s_v^1 (v) & = & \frac{1}{2} \left( 9n^2 + 12nr - 3r^2 + 43n + 17r + 22 \right), \\
w_e^1(e) & = & 12n - 6r + 16, \\
s_e^1(e) & = & 2n - r + 3.
\end{eqnarray*}

\noindent
Therefore, the corresponding indices of $G_{1r}$ can be computed:
\begin{eqnarray*}
w^+Sz_e^* (G_{1r}) & = & w_e^1(e) \left( s_v^1 (u) + \frac{s_e^1(e)}{2} \right) \left( s_v^1 (v) + \frac{s_e^1(e)}{2} \right) \\
& = & \frac{1}{2} \big( 486n^5 - 243n^4r - 864n^3r^2 + 864n^2r^3 - 270nr^4 \\ &+& 27r^5 + 4212n^4 - 3510n^3r - 2160n^2r^2 + 2088nr^3 - 360r^4 \\ &+& 12366n^3 - 9423n^2r - 2124nr^2 + 1410r^3 + 16272n^2 - 8820nr  \\ &+& 9810n - 1040r^2 - 2617r + 2200 \big)
\end{eqnarray*}
\noindent and
\begin{eqnarray*}
PI_v^s (G_{1r}) & = & s_e^1(e) \left( \left( w_v^1(u) \right)^2 + \left( w_v^1(v) \right)^2 \right) \\
& = &  \big( 36n^5 - 18n^4r + 64n^3r^2 - 64n^2r^3 + 20nr^4 - 2r^5 + 342n^4 - 16n^3r \\
  &+ & 192n^2r^2 - 176nr^3 + 30r^4 + 1288n^3 + 84n^2r + 192nr^2 - 128r^3 \\
  & + & 2276n^2 + 176nr + 72r^2 + 1876n + 94r + 582 \big).
\end{eqnarray*}


Next, we calculate the weights for strength-weighted quotient graph $G_2$:
\begin{eqnarray*}
w_v^2 (u_1)& = & w_v^2(v_1)  = n^2,  \\
s_v^2 (u_1) & = & s_v^2 (v_1) = \frac{1}{2} \left(3n^2-3n \right), \\
w_v^2 (u_2)& = & w_v^2(v_2)  =  1,  \\
s_v^2 (u_2) & = & s_v^2 (v_2) = 0, \\
w_v^2 (u_3)& = & w_v^2(v_3)  =  2n^2 + 12n + 8,  \\
s_v^2 (u_3) & = & s_v^2 (v_3) = 3n^2 + 16n + 7, \\
w_e^2(e_1) & = & w_e^2(e_2) = w_e^2(e_3) = w_e^2(e_4)= 6n-1, \\
s_e^2(e_1) & = & s_e^2(e_2) = s_e^2(e_3) = s_e^2(e_4)= n,\\
w_e^2(e_5) & = & w_e^2(e_6) = w_e^2(e_7) = w_e^2(e_8)= 5, \\
s_e^2(e_5) & = & s_e^2(e_6) = s_e^2(e_7) = s_e^2(e_8)= 1.
\end{eqnarray*}

\noindent
Hence, the corresponding indices of $G_2$ are:
\begin{eqnarray*}
w^+Sz_e^* (G_2) & = & 4 (6n-1) \left(3n^2 + 16n + 7 + \frac{1}{2}(3n^2-3n)  + n + 2 +\frac{2n+2}{2} \right) \\
& \cdot &  \left( \frac{1}{2}(3n^2-3n) + 3n^2 + 16n + 7 + n +  \frac{2n+2}{2} \right) \\
& + & 4 \cdot 5 \cdot \left(3n^2 + 16n + 7 + 3n^2-3n  + 2n + 1 +\frac{2n+2}{2} \right) \\
& \cdot &  \left( 3n^2 + 16n + 7 + 1 +  \frac{2n+2}{2} \right) \\
& = & 486n^5 + 3843n^4 + 10884n^3 + 12775n^2 + 6672n + 1300
\end{eqnarray*}
\noindent and
\begin{eqnarray*}
PI_v^s (G_2) & = & 4 n \left ( \left( 2n^2 + 12n + 8 + n^2 + 2 \right)^2 + \left( n^2 + 2n^2 + 12n + 8 \right)^2 \right) \\
&+& 4  \left ( \left( 2n^2 + 12n + 8 + 2n^2 + 1 \right)^2 + \left( 1 + 2n^2 + 12n + 8 \right)^2 \right) \\
& = &  72n^5 + 656n^4 + 2160n^3 + 3312n^2 + 2384n + 648.
\end{eqnarray*}

Furthermore, we consider quotient graph $G_3$. The weights for this strength-weighted graph are:
\begin{eqnarray*}
w_v^3 (u)& = & w_v^3(v)  =  3n^2 + 12n+9,  \\
s_v^3 (u) & = & s_v^3 (v) = \frac{1}{2} \left( 9n^2 + 31n +16 \right), \\
w_e^3(e) & = & 12n+8, \\
s_e^3(e) & = & 2n+2.
\end{eqnarray*}

\noindent
Therefore, we obtain the corresponding indices for $G_3$:
\begin{eqnarray*}
w^+Sz_e^* (G_3) & = & w_e^3(e) \left( s_v^3 (u) + \frac{s_e^3(e)}{2} \right) \left( s_v^3 (v) + \frac{s_e^3(e)}{2} \right) \\
& = & 243n^5 + 1944n^4 + 5427n^3 + 6390n^2 + 3348n + 648,
\end{eqnarray*}

\begin{eqnarray*}
PI_v^s (G_3) & = & s_e^3(e) \left( \left( w_v^3(u) \right)^2 + \left( w_v^3(v) \right)^2 \right)  \\
& = &  36n^5 + 324n^4 + 1080n^3 + 1656n^2 + 1188n + 324.
\end{eqnarray*}


Finally, by using the main result, Theorem \ref{glavni}, we deduce
\begin{eqnarray*}
w^+Sz_e^* (Co_n) & = & 6  \cdot \sum_{r=1}^n w^+Sz_e^*(G_{1r}) + 3 \cdot w^+Sz_e^*(G_2) + 3 \cdot w^+Sz_e^*(G_3) \\
& = & \frac{1}{4} \big( 2916n^6 + 32076n^5 + 133299n^4 + 273598n^3 \\
&+&  272901n^2 + 129002n + 23376 \big)
\end{eqnarray*}
\noindent and
\begin{eqnarray*}
PI_v^s (Co_n) & = & 6 \cdot \sum_{r=1}^n PI_v^s(G_{1r}) + 3 \cdot PI_v^s(G_2) + 3 \cdot PI_v^s(G_3) \\
& = & 216n^6 + 2484n^5 + 11205n^4 + 24480n^3 \\
&+& 27183n^2 + 14556n + 2916.
\end{eqnarray*}

\section{Conclusion}

In the paper we developed a cut method for computing  Szeged-like topological indices, which are some of the most investigated distance-based molecular descriptors. This method reduces the problem of calculating a topological index of a strength-weighted graph to the problem of computing the topological index of corresponding strength-weighted quotient graphs. As an example, we determined two Szeged-like topological indices for some benzenoid systems, phenylenes and coronoid systems, which are important and well-known classes of molecular graphs. In the case of benzenoid systems and phenylenes our method enables us to find the value of the topological index by using strength-weighted quotient trees, which leads to efficient algorithms. Regarding the future work, our main result can be applied to calculate Szeged-like topological indices of various molecular nanostructures and to deduce closed-form formulas for infinite families of such graphs.

\section*{Funding information} 

\noindent The author Niko Tratnik acknowledge the financial support from the Slovenian Research Agency (research core funding No. P1-0297 and J1-9109).

\end{document}